\newtheorem{teo}{Theorem}[section]
\newtheorem{pro}[teo]{Proposition}
\newtheorem{coro}[teo]{Corollary}
\newtheorem{lem}[teo]{Lemma}
\newtheorem{conj}[teo]{Conjecture}
\theoremstyle{definition}
\newtheorem{defi}[teo]{Definition}
\newtheorem{exam}[teo]{Example}
\newtheorem{rem}[teo]{Remark}
\newtheorem{nota}[teo]{Notation}
\newtheorem{algo}[teo]{Algorithm}
\newcommand{\N}{\mathbb N}
\newcommand{\Z}{\mathbb Z}
\newcommand{\Q}{\mathbb Q}
\newcommand{\R}{\mathbb R}
\newcommand{\C}{\mathbb C}
\newcommand{\K}{\mathbb K}
\newcommand{\Pro}{\mathbb P^{\binom{\ls}{\ld}-1}}
\newcommand{\V}{\mathbf{V}}
\newcommand{\Ky}{\K[Y_1,\ldots,Y_s]}
\newcommand{\Kx}{\K[X_1,\ldots,X_d]}
\newcommand{\m}{\mathfrak{m}}
\newcommand{\n}{\mathfrak{n}}
\newcommand{\bn}{\bar{\n}}
\newcommand{\bm}{\bar{\m}}
\newcommand{\I}{\mathcal{I}}
\newcommand{\Ltn}{\Lambda_{t,n}}
\newcommand{\Ls}{\Lambda_{s,n}}
\newcommand{\Ld}{\Lambda_{d,n}}
\newcommand{\ls}{\lambda_{s,n}}
\newcommand{\ld}{\lambda_{d,n}}
\newcommand{\Xg}{X_{\Gamma}}
\newcommand{\vp}{\varphi}
\newcommand{\vpg}{\varphi_{A}}
\newcommand{\vps}{\varphi^{*}}
\newcommand{\vpb}{\bar{\vps}}
\newcommand{\vpd}{(\vpb)^{\vee}}
\newcommand{\Tx}{T_x^nX}
\newcommand{\Tpg}{T_p^nX_{\Gamma}}
\newcommand{\Dn}{D^n_x(\varphi)}
\newcommand{\Dng}{D^n_x(\varphi_{A})}
\newcommand{\Jac}{Jac_n(F)}
\newcommand{\Na}{Nash_n(X)}
\newcommand{\Nag}{Nash_n(X_{\Gamma})}
\newcommand{\Oxx}{\mathcal{O}_{X,x}}
\newcommand{\con}{\sigma}
\newcommand{\cond}{\sigma^{\vee}}
\newcommand{\NI}{\mathcal{N}(I)}
\newcommand{\Nas}{Nash_n(\Gamma)}
\newcommand{\Gan}{\Gamma^{(n)}}
\newcommand{\Ga}{\Gamma}
\newcommand{\be}{\beta}
\newcommand{\ga}{\gamma}
\newcommand{\al}{\alpha}
\newcommand*\quot[2]{{^{\textstyle #1}\big/_{\textstyle #2}}}
\begin{document}

\title{A higher-order tangent map and a conjecture on the higher Nash blowup of curves}

\author{Enrique Ch\'avez-Mart\'inez, Daniel Duarte\footnote{Research supported by CONACyT grant 287622.}, 
Arturo Giles Flores\footnote{Research supported by CONACyT grant 221635.}}

\maketitle

%\begin{titlepage}
%\begin{center}
%{\huge A higher-order tangent map and a conjecture on the higher Nash blowup of curves \par}
%\end{center}

%\begin{center}
%{\Large by Enrique Ch\'avez-Mart\'inez, \\ Daniel Duarte \\ and \\ Arturo Giles Flores\par}
%\end{center}
%\vspace{2cm}
%E. Ch\'avez-Mart\'inez\\
%Universidad Nacional Aut\'onoma de M\'exico.\\
%Address: Av. Universidad s/n. Col. Lomas de Chamilpa, C.P. 62210, Cuernavaca, Morelos, Mexico.\\
%Email: enrique.chavez@im.unam.mx
%\\
%\\
%D. Duarte (corresponding author)\footnote{Research supported by CONACyT grant 287622.}\\
%CONACyT-Universidad Aut\'onoma de Zacatecas.\\
%Address: Jard\'in Ju\'arez 147, Centro Hist\'orico, C.P. 98000, Zacatecas, Zacatecas, Mexico.\\
%Email: aduarte@uaz.edu.mx\\
%Telephone: (+52) 656 166 1230.
%\\
%\\
%A. Giles Flores\footnote{Research supported by CONACyT grant 221635.}\\
%Universidad Aut\'onoma de Aguascalientes.\\
%Address: Av. Universidad 940, Ciudad Universitaria, C. P. 20131, Aguascalientes, Ags., Mexico.\\
%Email: arturo.giles@cimat.mx
%\vfill
%\centering
%{\large \today\par}

%\end{titlepage}

\begin{abstract}
We introduce a higher-order version of the tangent map of a morphism and find a matrix representation. We then apply this
matrix to solve a conjecture by T. Yasuda regarding the semigroup of the higher Nash blowup of formal curves. We first show 
that the conjecture is true for toric curves. We conclude by exhibiting a family of non-monomial curves where the conjecture fails.
\end{abstract}

%\noindent\keywords{Higher-order tangent map, Nash blowups, Jacobian matrix, curves.}
%\\
%\msc{14B05,14M25,32S45.}

\section*{Introduction} 

In recent years several authors have introduced higher-order versions of the Jacobian matrix. In \cite{D3}, a higher-order 
Jacobian matrix is studied in relation with the higher Nash blowup of a hypersurface. More recently,
in \cite{BD,BJNB}, a similar matrix is introduced for any finitely generated algebra. In these articles the matrices are used to study 
singularities in arbitrary characteristic or to study algebraic properties of the module of K\"ahler differentials of high order. In another 
but related direction, in \cite{dFDo} it is described a matrix associated to a relative compactification of the induced map on the main
components of jet schemes of a projective birational morphism. 

In this paper we introduce a matrix that represents a higher-order tangent map of a morphism.  This matrix involves higher-order 
derivatives, making it more suitable for some computations related to jet-spaces. Our main application of this matrix is the solution 
of a conjecture by T. Yasuda related to the higher Nash blowup of formal curves.

The Nash blowup and the higher Nash blowup of an algebraic variety are modifications that replace singular points by limits of certain 
vector spaces associated to the variety at non-singular points. There are several questions relating these modifications to resolution of 
singularities (\cite{No, S, Y1}). Those questions have been treated in \cite{No,R,GS-1,GS-2,Hi,Sp,Y1,GT,GM, At, D1, DG, T}. 
In \cite{Y1}, T. Yasuda proved that the higher Nash blowup solves singularities of curves. In a subsequent paper, Yasuda gave a 
conjectural explicit description of the semigroup of the higher Nash blowup of formal curves. In this paper we show that the conjecture 
is true for toric curves but false in general.

To study Yasuda's conjecture in the case of toric curves we first develop a combinatorial description for the higher Nash blowup of 
toric varieties. This result is based and inspired in the analogous description of the usual Nash blowup of toric varieties given in \cite{GT, GM}. 
Our results depend strongly on the general framework developed in \cite{GT} for not necessarily normal toric varieties. With this general
combinatorial description at hand, we are able to prove Yasuda's conjecture for toric curves.

Finally, we present a family of non-monomial curves showing that Yasuda's conjecture fails in general. By combining the results
we obtained for monomial morphisms and the general construction of the matrix representing the higher-order tangent map, we are
able to describe a particular element of the semigroup of the higher Nash blowup of this family of curves which does not belong to
the semigroup suggested by Yasuda.

The present paper is organized as follows. In section 1 we introduce a higher-order tangent map, find a matrix representation and study
its basic properties. In section 2 we study in detail the higher-order Jacobian matrix of monomial morphisms. In section 3 we construct
a special cover for the higher Nash blowup of a toric variety. Section 4 is devoted to prove Yasuda's conjecture concerning
the explicit description of the semigroup of the higher Nash blowup of a toric curve. Finally, in section 5 we exhibit a family of 
non-monomial curves for which Yasuda's conjecture fails.
\\
\\
\noindent\textbf{Convention: }Throughout this paper, $\K$ denotes a field of characteristic zero. In addition, starting at Section \ref{sect Nash}, we also assume that $\K$ is algebraically closed.

%%%%%%%%%%%%%%%%%%%%%%%%%%%%%%%%%%%%
%		A higher-order Jacobian matrix for parametrized varieties
%%%%%%%%%%%%%%%%%%%%%%%%%%%%%%%%%%%%

\section{A higher-order Jacobian matrix of morphisms} 

%%%%%%%	    A higher-order Jacobian matrix of a morphism	

\subsection{A higher-order Jacobian matrix of a morphism of affine spaces}

In this section we study a higher-order derivative of a morphism between affine varieties and find a matrix representation of 
this linear map. 

\begin{nota}\label{main not}
The following notation will be constantly used in this paper.
\begin{itemize}
\item The entries of vectors $\alpha\in\N^t$ are denoted as $\alpha=(\alpha(1),\ldots,\alpha(t))$.
\item $\alpha\leq\beta\Leftrightarrow\alpha(i)\leq\beta(i)\mbox{ }\forall i\in\{1,\ldots,t\}$. In particular, $\alpha<\beta$ if and only if $\alpha(i)\leq\beta(i)\mbox{ }\forall i\in\{1,\ldots,t\}$ and $\alpha(i)<\beta(i)$ for some $i\in\{1,\ldots,t\}$.
\item $|\alpha|=\alpha(1)+\cdots+\alpha(t)$.
\item $\alpha!=\alpha(1)!\alpha(2)!\cdots\alpha(t)!$
\item $\partial^{\alpha}=\partial^{\alpha(1)}\partial^{\alpha(2)}\cdots\partial^{\alpha(t)}$.
\item For $t,n\in\N$, $\Ltn:=\{\gamma\in\N^t|1\leq|\gamma|\leq n\}$. In addition, we denote $\lambda_{t,n}:=|\Ltn|=\binom{n+t}{t}-1$.
\end{itemize}
\end{nota}

Consider a morphism 
\begin{align}
\vp:\K^d&\rightarrow\K^s,\notag\\
x=(x_1,\ldots,x_d)&\mapsto(g_1(x),\ldots,g_s(x)).\notag 
\end{align}
Assume that $\vp$ is regular at some $x\in\K^d$ and let $y=\vp(x)\in\K^s$. Let $\m\subset\Kx$ and $\n\subset\Ky$ be the 
maximal ideals corresponding to $x$ and $y$, and $\m_x$, $\n_y$ the maximal ideals in $(\Kx)_{\m}$ and $(\Ky)_{\n}$, 
respectively.

Let $\vp^*:(\Ky)_{\n}\rightarrow(\Kx)_{\m}$ be the induced homomorphism on local rings, where $\vps(\n_y)\subset\m_x$. 
In particular, there is a homomorphism of $\K-$vector spaces for each $n\in\N$:
\begin{align}
(\vpb)_n:\n_y/\n_y^{n+1}&\rightarrow\m_x/\m_x^{n+1}.\notag
\end{align}
Let $A_x=\{(X-x)^{\alpha}:=(X_1-x_1)^{\alpha(1)}\cdots(X_d-x_d)^{\alpha(d)}|\alpha\in\Ld\}$ be a basis 
of $\m_x/\m_x^{n+1}$ as $\K-$vector space. Similarly, let $B_y=\{(Y-y)^{\beta}|\beta\in\Ls\}$ be a basis 
of $\n_y/\n_y^{n+1}$. The dual bases of $A_x$ and $B_y$ are, respectively,
\begin{align}
A_x^{\vee}&=\Big\{\frac{1}{\alpha!}\frac{\partial^{\alpha}}{\partial X^{\alpha}}\Big |_{x}|\alpha\in\Ld\Big\},
\notag\\
B_y^{\vee}&=\Big\{\frac{1}{\beta!}\frac{\partial^{\beta}}{\partial Y^{\beta}}\Big |_{y}|\beta\in\Ls\Big\}.\notag
\end{align}
Since $(\vpb)_n((Y-y)^{\beta})=(g_1-g_1(x))^{\beta(1)}\cdots(g_s-g_s(x))^{\beta(s)}=:(\vp-\vp(x))^{\beta}$, it follows that 
the dual morphism $\vpd_n:(\m_x/\m_x^{n+1})^{\vee}\rightarrow(\n_y/\n_y^{n+1})^{\vee}$ satisfies 
\begin{align}\label{e. coeff}
\vpd_n\Big(\frac{1}{\alpha!}\frac{\partial^{\alpha}}{\partial X^{\alpha}}\Big |_{x}\Big)=
\frac{1}{\alpha!}\frac{\partial^{\alpha}}{\partial X^{\alpha}}\Big |_{x}\circ(\vpb)_n:\n_y/\n_y^{n+1}&\rightarrow\K,\\
(Y-y)^{\beta}&\mapsto \frac{1}{\alpha!}\frac{\partial^{\alpha}(\vp-\vp(x))^{\beta}}{\partial X^{\alpha}}\Big |_{x}.\notag
\end{align}
It follows that the matrix representation of $\vpd_n$ in these bases is:
\begin{equation}\label{e. jac}
\big[\vpd_n\big]_{A_x^{\vee}}^{B_y^{\vee}}=\Big(\frac{1}{\alpha!}\frac{\partial^{\alpha}(\vp-\vp(x))^{\beta}}{\partial X^{\alpha}}\Big 
|_{x}\Big)_{\beta\in\Ls,\alpha\in\Ld}.
\end{equation}

\begin{defi}\label{jac matrix}
Let $\vp:\K^d\rightarrow\K^s$ be as before, where $\vp(x)=y$. We call the linear map $\vpd_n$ the \textit{derivative of order}
$n$ \textit{of} $\vp$ \textit{at} $x$. In addition, let
$$\Dn:=\Big(\frac{1}{\alpha!}\frac{\partial^{\alpha}(\vp-\vp(x))^{\beta}}{\partial X^{\alpha}}|_x\Big)
_{\beta\in\Ls, \alpha\in\Ld}.$$
We call $\Dn$ the \textit{Jacobian matrix of order} $n$ \textit{of} $\vp$ \textit{at} $x$ or the \textit{higher-order Jacobian matrix of} 
$\vp$ \textit{at} $x$. Notice that $\Dn$ is a $(\ls\times \ld)$-matrix. We order the rows and columns of this matrix increasingly using graded lexicographical order on $\Ls$ and $\Ld$. This order is denoted $\preceq$.
%, and assuming $X_1<X_2<\cdots<X_n$ and similarly for the variables $Y_j's$. 
\end{defi}

\begin{rem}\label{r. Taylor}
Notice that, for each $\beta\in\Ls$, the $\beta$ row of $\Dn$ corresponds precisely to the coefficients of the truncated Taylor 
expansion of order $n$ of $(\vp-\vp(x))^{\beta}$ centered at $x$.
\end{rem}

\begin{rem}
A similar higher-order Jacobian matrix of a single polynomial $F$ was defined in \cite{D3} and is denoted $\Jac$. See also 
\cite{BD,BJNB} for a further development of this matrix.
\end{rem}

\begin{exam}
Let $\vp:\K\rightarrow\K^2$, $t\mapsto(t,t^2)$. The usual matrix representation of the derivative of $\vp$
at $0\in\K$ is given by the Jacobian matrix:
\[D_0(\vp)= \left( 
\begin{array}{cc}
\frac{d t}{dt}|_0\\
\frac{d t^2}{dt}|_0
\end{array} \right)=
\left( \begin{array}{ccccc}
1 \\
2t
\end{array} \right){\big|_0}.
\]
Following the construction of the higher-order Jacobian matrix given previously, in the case $n=2$, we obtain:
\[
D^2_0(\varphi)= \left( \begin{array}{ccccc}
\frac{d t}{dt}|_0&  	       		&\frac{1}{2!}\frac{d^2 t}{dt^2}|_0\\
\frac{d t^2}{dt}|_0&  	      			&\frac{1}{2!}\frac{d^2 t^2}{dt^2}|_0\\
\frac{d (t)^2}{dt}|_0&       			&\frac{1}{2!}\frac{d^2 (t)^2}{dt^2}|_0\\
\frac{d (t\cdot t^2)}{dt}|_0&    		&\frac{1}{2!}\frac{d^2 (t\cdot t^3)}{dt^2}|_0\\
\frac{d (t^2)^2}{dt}|_0&       		&\frac{1}{2!}\frac{d^2 (t^2)^2}{dt^2}|_0      
\end{array} \right)=
\left( \begin{array}{ccccc}
1&	  	       		&0\\
2t&  	      			&1\\
0&	       			&1\\
0&	  	      			&2t\\
0&   	    			&4t^2
\end{array} \right)_{\big|_0}.
\]
\end{exam}

The higher-order Jacobian matrix satisfies the following basic properties.

\begin{lem}\label{l. basic properties}
Let $\vp:\K^d\rightarrow\K^s$ be as before.
\begin{itemize}
\item[(i)] If $\beta\in\Ls$ is such that $|\beta|=1$ then 
$\frac{\partial^{\alpha}(\vp-\vp(x))^{\beta}}{\partial X^{\alpha}}=\frac{\partial^{\alpha}\vp^{\beta}}{\partial X^{\alpha}}$
for every $\alpha\in\Ld$.
\item[(ii)] Let $\alpha\in\Ld,\beta\in\Ls$ be such that $|\alpha|<|\beta|$. Then
$\frac{\partial^{\alpha}(\vp-\vp(x))^{\beta}}{\partial X^{\alpha}}|_x=0$.
\item[(iii)] $D^1_x(\vp)$ is the usual Jacobian of $\vp$ evaluated at $x$.
\item[(iv)] If $\vp:\K^d\rightarrow\K^d$ is the identity then $\Dn$ is the identity matrix.
\item[(v)] Let $\psi:\K^s\rightarrow\K^r$ be another morphism. Then $D^n_x(\psi\circ\vp)=D^n_y(\psi)\Dn$.
\end{itemize}
\end{lem}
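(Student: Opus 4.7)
The plan is to verify each part using the explicit formula \eqref{e. jac} for $\Dn$ together with elementary calculus. For (i), observe that if $|\beta|=1$ then $\beta=e_{j}$ for some $j$, so $(\vp-\vp(x))^{\beta}=g_{j}-g_{j}(x)$ differs from $\vp^{\beta}=g_{j}$ only by the constant $g_{j}(x)$, which is annihilated by any $\partial^{\alpha}$ with $|\alpha|\geq 1$. Part (iii) then follows by specializing to $n=1$: here $\Ld=\{e_{1},\ldots,e_{d}\}$, $\Ls=\{e_{1},\ldots,e_{s}\}$, and (i) gives the $(\beta,\alpha)=(e_{j},e_{i})$ entry as $\partial g_{j}/\partial X_{i}\big|_{x}$, which is the classical Jacobian.

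For (ii), I would write $(\vp-\vp(x))^{\beta}=\prod_{j=1}^{s}(g_{j}-g_{j}(x))^{\beta(j)}$ as a product of $|\beta|$ factors each vanishing at $x$, and expand $\partial^{\alpha}$ of this product via the multivariable Leibniz rule as a sum of terms, each being a product of $|\beta|$ partial derivatives whose multi-indices sum to $\alpha$. When $|\alpha|<|\beta|$, the pigeonhole principle forces at least one factor in every such term to remain undifferentiated, and hence still vanish at $x$, so the whole sum vanishes upon evaluation. For (iv), with $\vp=\mathrm{id}$ we have $(\vp-\vp(x))^{\beta}=(X-x)^{\beta}$, which is already its own Taylor expansion at $x$; by Remark \ref{r. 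Taylor} the $\beta$-row of $\Dn$ then has a $1$ in the $\beta$-column and zeros elsewhere, so $\Dn$ is the identity.

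For (v) I would argue intrinsically, avoiding a direct combinatorial expansion via Fa\`a di Bruno. The ring-level identity $(\psi\circ\vp)^{*}=\vp^{*}\circ\psi^{*}$ descends on cotangent spaces modulo $(n+1)$-st powers to a composition of $\K$-linear maps in the same order; dualizing reverses that order and yields $(\psi\circ\vp)^{\vee}_{n}=\psi^{\vee}_{n}\circ\vpd_{n}$. Since the rows of $\Dn$ correspond to the codomain basis $B_{y}^{\vee}$ and the columns to the domain basis $A_{x}^{\vee}$, ordinary matrix multiplication realizes composition in exactly this order, giving $D^{n}_{x}(\psi\circ\vp)=D^{n}_{y}(\psi)\,\Dn$. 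The main obstacle is precisely (v): one must track carefully how duals reverse the order of composition and verify that the row/column convention for $\Dn$ matches the matrix representation of $\vpd_{n}$ in the dual bases $A_{x}^{\vee}$ and $B_{y}^{\vee}$. Once this bookkeeping is pinned down, the identity is automatic from functoriality, and the remaining items are essentially one-line observations.
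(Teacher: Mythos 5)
Your proposal is correct and follows essentially the same route as the paper: (i)--(iii) by the same elementary observations, (ii) via the Leibniz/pigeonhole argument the paper states more tersely, (iv) by noting $(X-x)^{\beta}$ is its own truncated Taylor expansion (the paper phrases this as $(\vpb)_n$ being the identity in the chosen basis), and (v) by functoriality of pullback and order reversal under dualization. No gaps.
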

\begin{proof}
\begin{itemize}
\item[(i)] If $\beta\in\Ls$ is such that $|\beta|=1$ then $(\vp-\vp(x))^{\beta}=g_i-g_i(x)$ for some $i\in\{1,\ldots,s\}$.
Since $g_i(x)$ is a constant, the result follows.
\item[(ii)] The hypothesis on $\alpha$ and $\beta$ means that in $\frac{\partial^{\alpha}(\vp-\vp(x))^{\beta}}{\partial X^{\alpha}}$
the order of the derivative is less than the number of factors in $(\vp-\vp(x))^{\beta}$. This implies that in every summand of 
$\frac{\partial^{\alpha}(\vp-\vp(x))^{\beta}}{\partial X^{\alpha}}$ there is a factor $g_i-g_i(x)$. Thus
$\frac{\partial^{\alpha}(\vp-\vp(x))^{\beta}}{\partial X^{\alpha}}|_x=0$.
\item[(iii)] This follows from the definition of $\Dn$ and $(i)$.
\item[(iv)] If $\vp$ is the identity then $(\vpb)_n:\m_x/\m_x^{n+1}\rightarrow\m_x/\m_x^{n+1}$ is also the identity. 
By choosing the same basis for both vector spaces we conclude that $\Dn$ is the identity matrix.
\item[(v)] We know that $(\psi\circ\vp)^*=\vp^*\circ\psi^*$. Thus, 
$(\overline{(\psi\circ\vp)^*})_n=(\bar{\vp^*})_n\circ(\bar{\psi^*})_n$. Taking duals 
$(\overline{(\psi\circ\vp)^*})_n^{\vee}=((\bar{\vp^*})_n\circ(\bar{\psi^*})_n)^{\vee}=
(\bar{\psi^*})_n^{\vee}\circ(\bar{\vp^*})_n^{\vee}$. The result follows.
\end{itemize}
\end{proof}

Now suppose that $X\subset\K^d$ and $Y\subset\K^s$ are affine varieties and let $\vp:X\rightarrow Y$ be a morphism which
is regular at $x\in X$ and let $y=\vp(x)$. Denote by $\bm_x$ and $\bn_y$ the maximal ideals of the corresponding local rings. 
Since $\vp$ is the restriction of a morphism $\vp:\K^d\rightarrow\K^s$, the diagram
$$\xymatrix{X \ar[r]^{\vp}  \ar@{^(->}[d]^i & Y \ar@{^(->}[d]^i \\  \K^d \ar[r]^{\vp} & \K^s }$$
induces the diagram
$$\xymatrix{(\bm_x/\bm_x^{n+1})^{\vee} \ar[r] \ar@{^(->}[d] & (\bn_y/\bn_y^{n+1})^{\vee}\ar@{^(->}[d]\\ 
(\m_x/\m_x^{n+1})^{\vee}\ar[r] & (\n_y/\n_y^{n+1})^{\vee}}$$
Taking bases as before we identify $(\m_x/\m_x^{n+1})^{\vee}\cong\K^{\ld}$ and $(\n_y/\n_y^{n+1})^{\vee}\cong\K^{\ls}$. 
The commutativity of the diagram
$$\xymatrix{(\bm_x/\bm_x^{n+1})^{\vee} \ar[r] \ar@{^(->}[d] & (\bn_y/\bn_y^{n+1})^{\vee}\ar@{^(->}[d]\\ 
\K^{\ld} \ar[r]^{\Dn} & \K^{\ls}}$$
allows us to define a higher-order tangent map of $\vp:X\rightarrow Y$ at $x\in X$ as the restriction 
$$\Dn:(\bm_x/\bm_x^{n+1})^{\vee}\rightarrow(\bn_y/\bn_y^{n+1})^{\vee}.$$

%%%%%%%	   Higher-order Jacobian matrix and birational morphisms

\subsection{Higher-order Jacobian matrix and birational morphisms}

Let $Y\subset\K^s$ be an irreducible algebraic variety and $y\in Y$. In this subsection, we use the higher-order Jacobian matrix 
to explicitly compute the space $(\bn_y/\bn_y^{n+1})^{\vee}$ in some cases.
%, where $\bn_y$ is the maximal ideal of the corresponding local ring.

\begin{lem}\label{l. birational}
Let $X$ and $Y$ be irreducible varieties and let $\vp:X\dashrightarrow Y$ be a birational morphism. Let 
$U\subset X$ and $V\subset Y$ be isomorphic open subsets. Let $x\in U$ and $y=\vp(x)\in V$. Then $\vp$ induces an 
isomorphism $\bn_y/\bn_y^{n+1}\cong\bm_x/\bm_x^{n+1}$.
\end{lem}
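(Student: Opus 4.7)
The plan is to exploit the fact that the ``higher cotangent spaces'' $\bn_y/\bn_y^{n+1}$ and $\bm_x/\bm_x^{n+1}$ are built from the local rings $\Ox_{Y,y}$ and $\Oxx$, which depend only on an arbitrarily small Zariski-open neighborhood of the point in question. Concretely, since $x\in U$ and $y\in V$, there are canonical identifications of local $\K$-algebras $\Oxx\cong\Ox_{U,x}$ and $\Ox_{Y,y}\cong\Ox_{V,y}$, under which $\bm_x$ corresponds to the maximal ideal of $\Ox_{U,x}$ and $\bn_y$ to the maximal ideal of $\Ox_{V,y}$.

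Next I would observe that the pullback $\vps:\Ox_{Y,y}\to\Oxx$ induced by $\vp$ (well-defined because $x$ lies in the domain of regularity of $\vp$, namely $U$) agrees, under the above identifications, with the pullback $(\vp|_U)^{*}:\Ox_{V,y}\to\Ox_{U,x}$ coming from the restriction. This is immediate from the compatibility of the pullbacks with passage to an open subset on which the rational map is regular. Since $\vp|_U:U\to V$ is an \emph{isomorphism} of varieties, $(\vp|_U)^{*}$ is an isomorphism of local rings which sends $\bn_y$ bijectively onto $\bm_x$, and hence $\bn_y^{n+1}$ onto $\bm_x^{n+1}$. Passing to quotients then yields the desired isomorphism $\bn_y/\bn_y^{n+1}\cong\bm_x/\bm_x^{n+1}$, and the isomorphism obtained is exactly the one induced by $\vp$.

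I do not anticipate any serious obstacle: the lemma is essentially a formal consequence of the local nature of the higher cotangent spaces combined with the defining property of birational morphisms. The only care needed is to verify the identification between the map induced by $\vp$ and the map induced by the isomorphism $\vp|_U$, which amounts to tracking the diagram of inclusions $U\hookrightarrow X$, $V\hookrightarrow Y$ under the pullback functor.
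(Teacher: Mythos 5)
Your argument is correct and is essentially the paper's own proof: both rest on the observation that $\vp|_U:U\to V$ being an isomorphism induces an isomorphism of local rings $\Ox_{Y,y}\cong\Oxx$ carrying $\bn_y$ onto $\bm_x$, whence the quotients $\bn_y/\bn_y^{n+1}$ and $\bm_x/\bm_x^{n+1}$ are identified. You merely spell out in more detail the identification of the local rings of $X$ and $Y$ with those of the open subsets $U$ and $V$, which the paper leaves implicit.
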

\begin{proof}
Since $\vp|_U:U\rightarrow V$ is an isomorphism, there is an induced isomorphism on local rings $\mathcal{O}_{Y,y}\cong\Oxx$.
In particular, $\vp^*(\bn_y)=\bm_x$. The result follows.
\end{proof}

\begin{pro}\label{p. image}
Let $\vp:\K^d\dashrightarrow Y\subset\K^s$ be a birational morphism, $U\subset\K^d$ and $V\subset Y$ isomorphic open
subsets, and $y=\vp(x)$ for some $x\in U$. Then the vector space $(\bn_y/\bn_y^{n+1})^{\vee}$ is isomorphic to 
the image of the linear map defined by $\Dn$. In particular, $rank(\Dn)=\ld$.
\end{pro}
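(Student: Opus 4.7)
The plan is to combine Lemma \ref{l. birational} with the commutative diagram at the end of the previous subsection that defines the higher-order tangent map of a morphism between affine varieties. Since the source of $\vp$ is the whole affine space $X=\K^d$, we have $\bm_x=\m_x$, so $(\bm_x/\bm_x^{n+1})^{\vee}=(\m_x/\m_x^{n+1})^{\vee}\cong\K^D$. On the target side, the inclusion $Y\subset\K^s$ yields a canonical surjection $\n_y/\n_y^{n+1}\twoheadrightarrow\bn_y/\bn_y^{n+1}$, which dualizes to an injection $(\bn_y/\bn_y^{n+1})^{\vee}\hookrightarrow(\n_y/\n_y^{n+1})^{\vee}\cong\K^M$. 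This is the subspace of $\K^M$ in which we expect to find the image of $\Dn$.

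First I would apply Lemma \ref{l. birational} to $\vp|_U:U\to V$: since the restriction is an isomorphism, the induced pullback $(\vpb)_n:\bn_y/\bn_y^{n+1}\to\bm_x/\bm_x^{n+1}$ is an isomorphism, and dualizing yields an isomorphism $(\bm_x/\bm_x^{n+1})^{\vee}\cong(\bn_y/\bn_y^{n+1})^{\vee}$. By construction this dual map is the higher-order tangent map of $\vp:X\to Y$ at $x$, and the commutative diagram from the previous subsection identifies it with the restriction of the ambient $\Dn:\K^D\to\K^M$ to the subspace $(\bm_x/\bm_x^{n+1})^{\vee}$.

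Because $X=\K^d$, that subspace is all of $\K^D$, so the ambient $\Dn$ itself factors as the composition of this isomorphism with the inclusion $(\bn_y/\bn_y^{n+1})^{\vee}\hookrightarrow\K^M$. Consequently the image of $\Dn$ equals $(\bn_y/\bn_y^{n+1})^{\vee}$, and $\rk(\Dn)=\dim(\bn_y/\bn_y^{n+1})^{\vee}=\dim(\bm_x/\bm_x^{n+1})^{\vee}=D$. The single point requiring care is verifying that the dual of the isomorphism furnished by Lemma \ref{l. birational} genuinely coincides with the restriction of the ambient $\Dn$; this is exactly the content of the commutative diagram defining the higher-order tangent map for subvarieties, so once that diagram is unravelled the argument is formal.
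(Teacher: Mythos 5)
Your argument is correct and follows essentially the same route as the paper: both proofs combine Lemma \ref{l. birational} with the commutative diagram relating $\Dn$ (viewed as the matrix of $\overline{(i\circ\vp)^*}^{\vee}:\K^D\to\K^M$) to the injection $(\bn_y/\bn_y^{n+1})^{\vee}\hookrightarrow\K^M$, concluding that the image of $\Dn$ is exactly $(\bn_y/\bn_y^{n+1})^{\vee}$ and hence has dimension $D$. The observation that $\bm_x=\m_x$ because the source is all of $\K^d$ is the same simplification the paper uses implicitly by working directly with $(\m_x/\m_x^{n+1})^{\vee}$.
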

\begin{proof}
We have the following commutative diagram
$$\xymatrix{\K^d \ar@{-->}[r]^{i\circ\vp} \ar@{-->}[dr]_{\vp} & \K^s \\ & Y \ar@{^(->}[u]_i}$$
This diagram induces in turn the following commutative diagram
$$\xymatrix{(\m_x/\m_x^{n+1})^{\vee} \ar[r]^{\overline{(i\circ\vp)^*}^{\vee}} \ar[dr]_{\vpd}^{\cong} 
& (\n_y/\n_y^{n+1})^{\vee}\\ 
& (\bn_y/\bn_y^{n+1})^{\vee}, \ar@{^(->}[u]_{(\bar{i^*})^{\vee}}}$$
where the isomorphism in the diagonal arrow comes from lemma \ref{l. birational}. 
Fixing bases for $\m_x/\m_x^{n+1}$ and $\n_y/\n_y^{n+1}$ as in the previous section, we identify 
$(\m_x/\m_x^{n+1})^{\vee}\cong\K^{\ld}$ and $(\n_y/\n_y^{n+1})^{\vee}\cong\K^{\ls}$, 
In addition, from (\ref{e. jac}), it follows that $\overline{(i\circ\vp)^*}^{\vee}$ is the linear map 
defined by the matrix $D_x^n(i\circ\vp)=\Dn$. We thus obtain the diagram
$$\xymatrix{\K^{\ld}\ar[r]^{\Dn} \ar[dr] & \K^{\ls}\\ 
& (\bn_y/\bn_y^{n+1})^{\vee}. \ar@{^(->}[u]}$$
The commutativity of this diagram proves the proposition.
\end{proof}

\begin{rem}
Notice that the proofs in this section considers local rings of points of a variety. Therefore, these results are also valid 
in the analytic case. In particular, we can define a higher-order Jacobian matrix for germs of analytic maps 
$\vp:(X,x)\rightarrow(Y,y)$. 
\end{rem}

\begin{exam}
Let $\vp:\K\rightarrow C=\V(y-x^2)\subset\K^2$, $t\mapsto(t,t^2)$. We computed $D_0^2(\vp):\K^2\rightarrow\K^5$ 
in the previous section. Let $\bn_0$ be the maximal ideal of $(0,0)\in C$. Using proposition \ref{p. image} we obtain
\[(\bn_0/\bn_0^{3})^{\vee}=Im(D_0^2(\vp))=
Im\left( 
\begin{array}{ccccc}
1& 	 	       		&0\\
2t&  	      			&1\\
0&	       			&1\\
0&  		      			&2t\\
0&	       			&4t^2
\end{array} 
\right)_{\big|_0}\subset\K^5.
\]
\end{exam}

%%%%%%%%%%%%%%%%%%%%%%%%%%%%%%%%%%%%
%		Higher Jacobian monomial morphisms
%%%%%%%%%%%%%%%%%%%%%%%%%%%%%%%%%%%%

\section{Higher order Jacobian matrix for monomial morphisms}\label{s. monomial morphism}

Let $a_1,\ldots,a_s\in\Z^d$. We assume that $d\leq s$. In this section we study the higher-order Jacobian matrix of the monomial morphism 
\begin{align}\label{monomial morph}
\vp:(\K\setminus\{0\})^d&\rightarrow\K^s \\
x=(x_1,\ldots,x_d)&\mapsto(x^{a_1},\ldots,x^{a_s}),\notag
\end{align}
where $x^{a_i}:=x_1^{a_i(1)}\cdots x_d^{a_i(d)}$. 

\begin{nota}
The following notation will be also used constantly.
\begin{itemize}
\item $A$ denotes the $(d\times s)$-matrix whose columns are the vectors $a_1,\ldots,a_s$. By abuse of notation, the set 
$\{a_1,\ldots,a_s\}$ is also denoted as $A$.
\item $A_i:=(a_1(i),\ldots,a_s(i))$, $i=1,\ldots,d$, denote the rows of $A$. In parti-cular, for $\gamma\in\N^s$,
$$X^{A\gamma}=X_1^{A_1\cdot\gamma}\cdots X_d^{A_d\cdot\gamma},$$
where $A\gamma$ is a product of matrices and $A_i\cdot\gamma$ is the usual inner product in $\R^s$.
\item For $\beta\in\N^s$, we denote
$$(X^A-x^A)^{\beta}:=(X^{a_1}-x^{a_1})^{\beta(1)}\cdots(X^{a_s}-x^{a_s})^{\beta(s)}.$$
\item For $\lambda,\tau\in\N^t$, 
denote $\binom{\lambda}{\tau}:=\binom{\lambda(1)}{\tau(1)}\cdots\binom{\lambda(t)}{\tau(t)}.$
\end{itemize}
\end{nota}

With this notation, the higher-order Jacobian of $\vp$ at a point $x\in(\K\setminus\{0\})^d$ is given by:
$$\Dn=\Big(\frac{1}{\alpha!}\frac{\partial^{\alpha}(X^A-x^A)^{\beta}}{\partial X^{\alpha}}|_{x}\Big)
_{\beta\in\Ls, \alpha\in\Ld}.$$
We are interested in computing the maximal minors of this matrix. This will be done in several steps.

%For the following lemmas we need another multi-index notation concerning binomial coefficients. For $\lambda,\tau\in\N^t$, 
%denote $\binom{\lambda}{\tau}:=\binom{\lambda(1)}{\tau(1)}\cdots\binom{\lambda(t)}{\tau(t)}.$

\begin{lem}\label{l. deriv mon}
Let $\gamma\in\N^s$ and $\alpha\in\N^d$. Then 
$$\frac{1}{\alpha!}\frac{\partial^{\alpha}(X^{A\gamma})}{\partial X^{\alpha}}=\binom{A\gamma}{\alpha}X^{A\gamma-\alpha}.$$
%where $b_{\gamma,\alpha}:=\prod_{i=1}^{d}\prod_{j=0}^{\alpha(i)-1}(A_i\cdot\gamma-j)$.
\end{lem}
\begin{proof}
This is a direct computation.
\end{proof}

%%%%%%%%%%%%%%%%%%%%%%%%%%%%%nueva referencia
%%%%%%%%%%%%%%%%%%%
\begin{lem}\label{coeficientes}
Let $\beta\in\Ls$, $\alpha\in\Ld$ and $x\in(\K\setminus\{0\})^d$. Then
$$\frac{1}{\alpha!}\partial^{\alpha}(X^A-x^A)^{\beta}|_{x}=c_{\beta,\alpha}x^{A\beta-\alpha},$$
where $c_{\beta,\alpha}:=\sum_{\gamma\leq\beta,\gamma\neq0}
(-1)^{|\beta-\gamma|}\binom{\beta}{\gamma}\binom{A\gamma}{\alpha}$.
\end{lem}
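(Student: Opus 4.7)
The plan is to expand $(X^A-x^A)^{\beta}$ factor by factor via the ordinary binomial theorem, then differentiate term by term using Lemma \ref{l. deriv mon}, evaluate at $X=x$, and collect. No deep idea is needed; the whole statement is a direct calculation in multi-index notation.

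First, writing $(X^A-x^A)^{\beta}=\prod_{i=1}^{s}(X^{a_i}-x^{a_i})^{\beta(i)}$ and applying the binomial theorem to each factor, I would obtain
\[
(X^A-x^A)^{\beta}=\sum_{\gamma\leq\beta}(-1)^{|\beta-\gamma|}\binom{\beta}{\gamma}x^{A(\beta-\gamma)}X^{A\gamma},
\]
where I use the multi-index conventions $x^{A(\beta-\gamma)}=\prod_{i=1}^{s}(x^{a_i})^{\beta(i)-\gamma(i)}$ and $X^{A\gamma}=\prod_{i=1}^{s}(X^{a_i})^{\gamma(i)}$. Note $x^{A(\beta-\gamma)}$ is a scalar with respect to $X$.

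Next I would apply $\partial^{\alpha}$ term by term. Since the coefficient $x^{A(\beta-\gamma)}$ is constant in $X$, Lemma \ref{l. deriv mon} gives $\partial^{\alpha}X^{A\gamma}=b_{\gamma,\alpha}X^{A\gamma-\alpha}$. Therefore
\[
\partial^{\alpha}(X^A-x^A)^{\beta}\Big|_{x}
=\sum_{\gamma\leq\beta}(-1)^{|\beta-\gamma|}\binom{\beta}{\gamma}b_{\gamma,\alpha}\,x^{A(\beta-\gamma)}x^{A\gamma-\alpha}
=x^{A\beta-\alpha}\sum_{\gamma\leq\beta}(-1)^{|\beta-\gamma|}\binom{\beta}{\gamma}b_{\gamma,\alpha},
\]
using $A(\beta-\gamma)+(A\gamma-\alpha)=A\beta-\alpha$ and the fact that $x\in(\K\setminus\{0\})^{d}$ so that the monomials make sense.

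Finally, I would justify that the $\gamma=0$ term can be dropped in order to match the statement. By definition, $b_{0,\alpha}=\prod_{i=1}^{d}\prod_{j=0}^{\alpha(i)-1}(-j)$; since $\alpha\in\Ld$ forces $|\alpha|\geq 1$, there is some $i$ with $\alpha(i)\geq 1$, and the $j=0$ factor makes $b_{0,\alpha}=0$. Dividing through by $\alpha!$ now yields
\[
\frac{1}{\alpha!}\partial^{\alpha}(X^A-x^A)^{\beta}\Big|_{x}=c_{\beta,\alpha}\,x^{A\beta-\alpha},
\]
exactly as claimed. The main (minor) obstacle is purely notational: being careful with the multi-index identities $A(\beta-\gamma)+A\gamma=A\beta$ and $\binom{\beta}{\gamma}=\prod_{i}\binom{\beta(i)}{\gamma(i)}$, and checking the vanishing of $b_{0,\alpha}$ so that the indexing agrees with the definition of $c_{\beta,\alpha}$.
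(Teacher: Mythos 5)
Your proposal is correct and follows essentially the same route as the paper: binomial expansion of $(X^A-x^A)^{\beta}$, term-by-term differentiation via Lemma \ref{l. deriv mon}, and recombination of the exponents into $x^{A\beta-\alpha}$. Your explicit check that the $\gamma=0$ term drops out (via $b_{0,\alpha}=0$, equivalently because that term is constant in $X$ and $|\alpha|\geq1$) is a small detail the paper leaves implicit.
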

\begin{proof}
From the binomial theorem we obtain, for each $i\in\{1,\ldots,s\}$:
$$(X^{a_i}-x^{a_i})^{\beta(i)}=\sum_{\gamma(i)=0}^{\beta(i)}(-1)^{\beta(i)-\gamma(i)}\binom{\beta(i)}{\gamma(i)}
(X^{a_i})^{\gamma(i)}(x^{a_i})^{\beta(i)-\gamma(i)}.$$
Thus, letting $\gamma:=(\gamma(1),\ldots,\gamma(s))$,
\begin{align}
(X^A-x^A)^{\beta}&=\sum_{\gamma(1)=0}^{\beta(1)}\cdots\sum_{\gamma(s)=0}^{\beta(s)}
(-1)^{|\beta-\gamma|}\binom{\beta}{\gamma}\prod_{i=1}^{s}(X^{a_i})^{\gamma(i)}(x^{a_i})^{\beta(i)-\gamma(i)}\notag\\
&=\sum_{\gamma\leq\beta}(-1)^{|\beta-\gamma|}\binom{\beta}{\gamma}
(X^{\sum\gamma(i)a_i})(x^{\sum\beta(i)a_i-\sum\gamma(i)a_i})\notag\\
&=\sum_{\gamma\leq\beta}(-1)^{|\beta-\gamma|}\binom{\beta}{\gamma}
(X^{A\gamma})(x^{A\beta-A\gamma}).\notag
\end{align}
With this formula and the previous lemma now it is easy to compute the derivative evaluated at $x$:
\begin{align}
\frac{1}{\alpha!}\partial^{\alpha}(X^A-x^A)^{\beta}|_{x}&=\sum_{\gamma\leq\beta,\gamma\neq0}(-1)^{|\beta-\gamma|}
\binom{\beta}{\gamma}(x^{A\beta-A\gamma})\frac{1}{\alpha!}\partial^{\alpha}(X^{A\gamma})|_x\notag\\
&=\sum_{\gamma\leq\beta,\gamma\neq0}(-1)^{|\beta-\gamma|}\binom{\beta}{\gamma}(x^{A\beta-A\gamma})
\binom{A\gamma}{\alpha}X^{A\gamma-\alpha}|_x\notag\\
&=\sum_{\gamma\leq\beta,\gamma\neq0}(-1)^{|\beta-\gamma|}\binom{\beta}{\gamma}
\binom{A\gamma}{\alpha}x^{A\beta-\alpha}\notag\\
&=\Big[\sum_{\gamma\leq\beta,\gamma\neq0}(-1)^{|\beta-\gamma|}\binom{\beta}{\gamma}
\binom{A\gamma}{\alpha}\Big]x^{A\beta-\alpha}.\notag
\end{align}
\end{proof}

Using this lemma it follows that the higher-order Jacobian of $\vp$ at each $x\in(\K\setminus\{0\})^d$ has the following shape:
\begin{equation}\label{e. Jnp monomial phi}
\Dn=\Big(c_{\beta,\alpha}x^{A\beta-\alpha}\Big)_{\beta\in\Ls, \alpha\in\Ld}.
\end{equation}

\begin{pro}\label{p. minors}
Let $J=\{\beta_1,\ldots,\beta_{\ld}\}\subset\Ls$, where $\beta_1\prec\ldots\prec\beta_{\ld}$ (see definition \ref{jac matrix} for the 
notation $\prec$). Let $L_J$ denote the submatrix of $\Dn$ formed by the rows $\beta_1,\ldots,\beta_{\ld}$ and all of its columns 
$\alpha_1,\ldots,\alpha_{\ld}$. Then, if $x\in(\K\setminus\{0\})^d$, 
$$\det(L_J)=\frac{x^{A\beta_1+\cdots+A\beta_{\ld}}}{x^{\alpha_1+\cdots+\alpha_{\ld}}}\det(L_J^c),$$
where $L_J^c:=(c_{\beta_i,\alpha_j})_{i,j}$.
\end{pro}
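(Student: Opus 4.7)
The plan is to reduce the computation to a direct application of the multilinearity of the determinant in rows and columns. By equation (\ref{e. Jnp monomial phi}), the $(\beta_i,\alpha_j)$-entry of $L_J$ is $c_{\beta_i,\alpha_j}\,x^{A\beta_i-\alpha_j}$. The decisive observation is that this monomial factor splits as
$$x^{A\beta_i-\alpha_j}=x^{A\beta_i}\cdot x^{-\alpha_j},$$
so the only part depending on $i$ and the only part depending on $j$ are cleanly separated. This splitting is legitimate precisely because $x\in(\K\setminus\{0\})^d$, so negative powers of the coordinates are well defined.

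First, I would apply multilinearity of the determinant on the rows: for each $i=1,\ldots,D$, the common factor $x^{A\beta_i}$ can be pulled out of the $i$-th row, producing an overall scalar
$$\prod_{i=1}^{D}x^{A\beta_i}=x^{A\beta_1+\cdots+A\beta_D}.$$
Then I would apply multilinearity on the columns of the resulting matrix: for each $j=1,\ldots,D$, the factor $x^{-\alpha_j}$ in every entry of column $j$ can be pulled out, producing the overall scalar
$$\prod_{j=1}^{D}x^{-\alpha_j}=x^{-(\alpha_1+\cdots+\alpha_D)}.$$

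After both extractions, what remains inside the determinant is the matrix whose $(i,j)$-entry is $c_{\beta_i,\alpha_j}$, which is exactly $L_J^c$. Multiplying out the two scalar factors gives
$$\det(L_J)=\frac{x^{A\beta_1+\cdots+A\beta_D}}{x^{\alpha_1+\cdots+\alpha_D}}\det(L_J^c),$$
as required. I do not anticipate any real obstacle: the whole argument is just the bookkeeping that the monomial weight of an entry of $\Dn$ is additive in its row- and column-indices, so multilinearity separates these contributions exactly.
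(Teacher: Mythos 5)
Your proof is correct and is essentially the same as the paper's: both extract the monomial factors $x^{A\beta_i}$ from the rows and $x^{-\alpha_j}$ from the columns via multilinearity of the determinant, leaving exactly $L_J^c$. No further comment is needed.
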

\begin{proof}
The matrix whose determinant we want to compute is the following:
\[L_J=
\left( \begin{array}{cccc}
&c_{\beta_1,\alpha_1}x^{A\beta_1-\alpha_1}			&\cdots		&c_{\beta_1,\alpha_{\ld}}x^{A\beta_1-\alpha_{\ld}}\\
&c_{\beta_2,\alpha_1}x^{A\beta_2-\alpha_1}			&\cdots		&c_{\beta_2,\alpha_{\ld}}x^{A\beta_2-\alpha_{\ld}}\\
&\vdots  											&\cdots	 	&\vdots\\
&c_{\beta_{\ld},\alpha_1}x^{A\beta_{\ld}-\alpha_1}		&\cdots		&c_{\beta_{\ld},\alpha_{\ld}}x^{A\beta_{\ld}-\alpha_{\ld}}
\end{array} \right).
\]
Multiply the $\alpha_j th$ column by $x^{\alpha_j}$. Then multiply the $\beta_i th$ row by $x^{-A\beta_i}$. 
Let $L_J^c=(c_{\beta_i,\alpha_j})_{i,j}$. Then
\begin{equation}\label{e. minors}
\det(L_J)=\frac{x^{A\beta_1+\cdots+A\beta_{\ld}}}{x^{\alpha_1+\cdots+\alpha_{\ld}}}\det(L_J^c).
\end{equation}
\end{proof}

\begin{rem}\label{r. Taylor coef}
If $n=1$ then $\ld=d$. Letting $\beta_{i_k}=e_{i_k}\in\N^s$ for $k=1,\ldots,d$ and 
$J=\{\beta_{i_1},\ldots,\beta_{i_d}\}\subset\Lambda_{s,1}$, it follows that $L_J^c$ is the $(d\times d)$-matrix whose rows are 
$a_{i_1},\ldots,a_{i_d}$. In particular, in view of (\ref{e. minors}), $\det(L_J)\neq0$ if and only if $a_{i_1},\ldots,a_{i_d}$ are 
linearly independent. This remark allows a comparison between the so-called logarithmic Jacobian ideal of a toric variety and an ideal whose blowup defines the Nash blowup of the variety \cite{GS-1,No}. This, in turn, gives place to the fact that the Nash blowup of a toric variety can be obtained as the blowup of its logarithmic Jacobian ideal (see \cite{GS-1,LJ-R,GT}). As a result, there is an explicit combinatorial description of the Nash blowup in this context \cite{GS-1,GT,GM}.
\end{rem}

%%%%%%%%%%%%%%%%%%%%%%%%%%%%%%%%%%%%
%		Higher Nash blowup toric varieties
%%%%%%%%%%%%%%%%%%%%%%%%%%%%%%%%%%%%

\section{Higher Nash blowup of toric varieties}\label{sect Nash}

In this section we exhibit an open cover for the higher Nash blowup of a toric variety. The main result is the first step 
toward our study of a conjecture proposed by T. Yasuda regarding the higher Nash blowup of formal curves.

We start by recalling the definition of the higher Nash blowup of an algebraic variety. Subsection \ref{s. cover} is based on the general theory of (not necessarily normal) toric varieties developed in \cite{GT} and also uses some ideas appearing in \cite{GM}.

\subsection{Higher Nash blowup}

\begin{nota}
Given an irreducible algebraic variety $X\subset\K^s$ of dimension $d$ and a point $x\in X$, we denote 
$\Tx:=(\bm_x/\bm_x^{n+1})^{\vee}$. This is a vector space of dimension $\ld$, whenever $x$ is a non-singular point.
\end{nota}

Notice that $X\subset\K^s$ implies $\Tx\subset T^n_x\K^s\cong\K^{\ls}$. Thus, if $x$ is a non-singular point, we can see $\Tx$ 
as an element of the Grassmanian $Gr(\ld,\K^{\ls})$.

\begin{defi}\cite{No, OZ, Y1}\label{d. Nash}
Let $X\subset\K^s$ be an irreducible algebraic variety of dimension $d$. Consider the \textit{Gauss map of order} $n$:
\begin{align}
G_n:X\setminus Sing(X&)\rightarrow Gr(\ld,\K^{\ls})\notag\\
x&\mapsto\Tx,\notag
\end{align}
where $Sing(X)$ denotes the set of singular points of $X$. Denote by $\Na$ the Zariski closure of the graph of $G_n$. 
Call $\pi_n$ the restriction to $\Na$ of the projection of $X\times Gr(\ld,\K^{\ls})$ to $X$. The pair $(\Na,\pi_n)$ is called 
the \textit{higher Nash blowup} of $X$ or the \textit{Nash blowup of X of order n}.
\end{defi}

It was proposed by T. Yasuda (\cite{Y1}) to solve the singularities of $X$ by applying once the higher Nash blowup for $n$ 
sufficiently large. Yasuda himself proved that his method works for curves (\cite[Corollary 3.7]{Y1}). Moreover, Yasuda suggested
in \cite[Remark 1.5]{Y3} that the $A_3$-singularity might be a counterexample to his conjecture on the one-step resolution. 
R. Toh-Yama recently proved in \cite{T} that $Nash_n(A_3)$ is singular for every $n\geq1$.

In addition to the previous conjecture, Yasuda also proposed another one concerning the numerical semigroup 
associated to the higher Nash blowup of formal curves (see conjecture \ref{conj} below). The results 
we obtain in this section will be used to study this conjecture in the case of toric curves.

%%%%%	  Open cover higher Nash

\subsection{An explicit open cover of the higher Nash blowup of toric varieties by affine toric varieties}\label{s. cover}

Let us recall the definition of an affine toric variety (see, for instance, \cite[Section 1.1]{CLS} or \cite[Chapter 4]{St}). 

\begin{defi}\label{d. toric}
Let $A=\{a_1,\ldots,a_s\}\subset\Z^d$. Let $\Ga:=\N A$ denote the semigroup generated by $A$, i.e., $\Ga=\{\sum_i\lambda_ia_i|\lambda_i\in\N\}$. In addition, assume that $\Z A=\{\sum_i\lambda_i a_i|\lambda_i\in\Z\}=\Z^d$. Consider the following monomial morphism:
\begin{align}\label{e. mon map}
\vpg:(\K^*)^d&\rightarrow \K^s\\
x=(x_1,\ldots,x_d)&\mapsto (x^{a_1},\ldots,x^{a_s}),\notag
\end{align}
where $\K^*=\K\setminus\{0\}$. Let $X_{\Gamma}$ denote the Zariski closure of the image of $\vpg$. We call $X_{\Gamma}$ the \textit{affine toric variety} defined by $\Gamma$.
\end{defi}

It is well known that $X_{\Gamma}$ is an irreducible variety of dimension $d$, contains a dense open set isomorphic to $(\K^*)^d$ 
and such that the natural action of $(\K^*)^d$ on itself extends to an action on the variety. In addition, $\Xg$ does not depend on the generating set $A$ (see \cite[Theorem 1.1.17]{CLS} for various equivalent characterizations of affine toric varieties).

\begin{pro}\cite[Prop. 1.2.12]{CLS},\cite[Prop. 15]{GT}\label{0 dim orbit}
Let $X_{\Ga}\subset\K^s$ be an affine toric variety, $\cond:=\R_{\geq0}\Gamma\subset\R^d$ the cone generated by $\Ga$, and $\sigma$ its dual cone. The following statements are equivalent:
\begin{itemize}
\item[(a)] $0\in\Xg$.
\item[(b)] $\Xg$ has a 0-dimensional orbit.
\item[(c)] The cone $\con$ is of dimension $d$.
\item[(d)] The cone $\cond$ is strongly convex.
\end{itemize}
\end{pro}

We want to show that the higher Nash blowup of a toric variety having a 0-dimensional orbit, has a finite open cover given by affine toric varieties with the same property. The proof of this fact is based on the following combinatorial construction of blowing ups of monomials ideals in toric varieties (see \cite[Section 2.6]{GT}).
\\
\\
\noindent\textbf{Combinatorial description of the blowup of a monomial ideal.} Let $\Xg\subset\K^s$ be an affine toric variety having a 0-dimensional orbit and $\cond=\R_{\geq0}\Ga\subset\R^d$ (which is strongly convex, by the previous proposition).
\begin{itemize}
\item[(i)] Let $I=\langle x^m |m\in B\rangle\subset\K[\Xg]$ be a monomial ideal.
\item[(ii)] Let $\NI$ be the Newton polyhedron of $I$, i.e., the convex hull in $\R^d$ of the set $\{m+\cond|m\in B\}$.
\item[(iii)] Let $m'\in B$. Denote $\Ga_{m'}:=\Ga+\N(\{m-m'|m\in B\}).$
\item[(iv)] Given $m',m''\in B$, the affine toric varieties $X_{\Gamma_{m'}}$ and $X_{\Gamma_{m''}}$ can be glued together along the principal open subsets $X_{\Gamma_{m'}}\setminus\V(x^{m''-m'})$ and $X_{\Gamma_{m''}}\setminus\V(x^{m'-m''})$. There is an  isomorphism between these open subsets which is induced by localizations of coordinate rings:
$$\K[X_{\Gamma_{m'}}]_{\frac{x^{m''}}{x^{m'}}}
\cong \K[X_{\Gamma_{m''}}]_{\frac{x^{m'}}{x^{m''}}}.$$
\item[(v)] The variety resulting from the previous glueing is the blowup of $\Xg$ along $I$ (see \cite[Proposition 32]{GT}). We denote it as $Bl_I\Xg$.
\item[(vi)] Finally, let $B'=\{m'\in B|m'\mbox{ is a vertex of }\NI\}$. Then 
$$Bl_I\Xg=\quad \quot{\bigsqcup_{m' \in B'}X_{\Gamma_{m'}}}{\sim}$$
(see the proof of Proposition 32, \cite{GT}). By proposition \ref{0 dim orbit}, for $m'\in B'$, $X_{\Ga_{m'}}$ has a 0-dimensional orbit. In particular, $Bl_I\Xg$ has an open cover by affine toric varieties having a 0-dimensional orbit.
\end{itemize}

\begin{rem}
The variety resulting from the previous construction is an example of an abstract toric variety having a good action (see \cite[Section 2.8]{GT}). These varieties are characterized by the fact that they can be described in combinatorial terms by families of semigroups labeled by fans (see \cite[Theorem 44]{GT}).
\end{rem}

In order to use the previous construction and compare it to the higher Nash blowup of a toric variety, we need to introduce some monomial ideal. In addition, we use the Pl\"ucker embedding of $Gr(\ld,\K^{\ls})$ into the projective space $\Pro$.  First, some notation.

\begin{nota}\label{e. S}
Let $A=\{a_1,\ldots,a_s\}\subset\Z^d$ and $\Gamma=\N A$ a semigroup defining a toric variety $\Xg\subset\K^s$. 
\begin{itemize}
\item Given $J=\{\beta_1,\ldots,\beta_{\ld}\}\subset\Ls$ such that $\beta_1\prec\cdots\prec\beta_{\ld}$, we denote as $U_J$ the affine
chart of $\Pro$ where the $J$-coordinate is non-zero (see definition \ref{jac matrix} for the notation $\prec$).
\item Let $S_A:=\{J=\{\beta_1,\ldots,\beta_{\ld}\}\subset\Ls|\beta_1\prec\cdots\prec\beta_{\ld},\det(L_J^c)\neq0\}.$ 
Notice that $S_A\neq\emptyset$ by propositions \ref{p. image} and \ref{p. minors}.
\item For each $J=\{\beta_1,\ldots,\beta_{\ld}\}\subset \Lambda_{s,n}$, denote $m_J:=A\beta_1+\cdots+A\beta_{\ld}.$ 
\end{itemize}
\end{nota}

\begin{defi}\label{log jac n}
Let $\I_n:=\langle X^{m_J}|J\in S_A\rangle\subset\K[\Xg]$. Following the usual terminology, we call $\I_n$ the \textit{logarithmic Jacobian ideal of order n} of $\Xg$.
\end{defi}

\begin{rem}
In the following subsection we show that $\I_n$ does not depend on the set of generators of $\Gamma$.
\end{rem}

We want to apply the combinatorial description of the blowup of a monomial ideal to $\I_n$. To that end, we simplify a little the notation coming from that description. For $X^{m_{J}}\in\I_n$, instead of using $\Gamma_{m_{J}}$ as in (iii), we simply write $\Ga_{J}$.

Now we are ready to prove the main theorem of this section.

\begin{teo}\label{main s3}
Let $\Xg\subset\K^s$ be an affine toric variety having a 0-dimensional orbit. Then $\Nag$ is isomorphic to the blowup of the logarithmic Jacobian ideal of order $n$ of $\Xg$. In particular, $\Nag$ has a finite open covering given by affine toric varieties having a 0-dimensional orbit.
\end{teo}
\begin{proof}
We divide this proof into two steps: the first one describes locally $\Nag$ and the second one is a glueing argument.
\\

\noindent\textit{Step I: } According to proposition \ref{p. image} and (\ref{e. Jnp monomial phi}), for a point 
$p:=\vpg(x)\in\Xg$, for some $x\in(\K^*)^d$, we have
$$\Tpg=Im(\Dng)=Im\Big(c_{\beta,\alpha}x^{A\beta-\alpha}\Big)_{\beta\in\Ls, \alpha\in\Ld}.$$
Thus, the Pl\"ucker coordinates of $\Tpg\in Gr(\ld,\K^{\ls})\hookrightarrow\Pro$ are given by the maximal minors of 
$\big(c_{\beta,\alpha}x^{A\beta-\alpha}\big)_{\beta,\alpha}$. According to (\ref{e. minors}), for a choice 
$J=\{\beta_1,\ldots,\beta_{\ld}\}\subset\Ls$, where $\beta_1\prec\ldots\prec\beta_{\ld}$, the corresponding minor is:
$$\det(L_J^c)\frac{x^{A\beta_1+\cdots+A\beta_{\ld}}}{x^{\alpha_1+\cdots+\alpha_{\ld}}}.$$
Fix $J_0\in S_A$. It follows that: 
\begin{enumerate}
\item If $J\in S_A$ we can make a change of coordinates in $U_{J_0}\cong\K^{s+\binom{\ls}{\ld}-1}$ to turn the non-zero 
constant $\frac{\det(L_J^c)}{\det(L_{J_0}^c)}$ into 1. Thus, we can assume that the $J$-coordinate of $\Nag\cap U_{J_0}$ 
comes with the constant 1, for every $J\in S_A$.
\item If $J\notin S_A$ the $J$-coordinate of $\Nag$ is zero. This implies that we can embed $\Nag\cap U_{J_0}$ 
in $\K^{s+|S_A|-1}$.
\end{enumerate}
These two remarks imply that
\begin{align}\label{e. af chart}
\Nag\cap U_{J_0}&\cong\overline{\{\Big(\vpg(x),\frac{x^{\sum_{\beta_i\in J}A\beta_i}}
{x^{\sum_{\beta^0_i\in J_0}A\beta^0_i}}\Big)|J\in S_A\setminus\{J_0\},x\in(\K^*)^d\}}\notag\\
&=\overline{\{(\vpg(x),x^{m_J-m_{J_0}})|J\in S_A\setminus\{J_0\},
x\in(\K^*)^d\}}\\
&=\overline{Im(\varphi_{\Ga_{J_0}})}\subset\K^{s+|S_A|-1}.\notag
\end{align}
In particular, this affine chart of $\Nag$ is an affine toric variety.
\\

\noindent\textit{Step II: } By Step I, for each $J\in S_A$, $X_{\Gamma_J}\cong\Nag\cap U_{J}$. 
Since both $Bl_{\I_n}\Xg$ and $\Nag$ are obtained by glueing $X_{\Gamma_J}$ and $\Nag\cap U_{J}$, respectively, we only need to 
check that the glueing is the same. The glueing in $\Nag\subset\Xg\times\Pro$ is given by the usual glueing in $\Pro$, i.e., 
the one induced by the following isomorphisms of localizations of coordinate rings for each couple $J_1,J_2\in S_A$: 
\begin{align}
\K[x^{a_1},\ldots,x^{a_s},x^{m_J-m_{J_1}}&|J\in S_A\notin\{J_1\}]_{\frac{x^{m_{J_2}}}{x^{m_{J_1}}}}\notag\\
&\cong \K[x^{a_1},\ldots,x^{a_s},x^{m_J-m_{J_2}}|J\in S_A\notin\{J_2\}]_{\frac{x^{m_{J_1}}}{x^{m_{J_2}}}}.\notag
\end{align}
This is exactly the glueing described in the combinatorial description of the blowup of a monomial ideal.
\end{proof}

\begin{rem}
For $n=1$, the previous theorem was proved in \cite{GS-1,LJ-R,GT}.
\end{rem}

\begin{rem}
The previous theorem and its proof show that $\Nag$ can be covered by open affine varieties which are invariant under the action of a torus. This statement could be obtained directly using results of \cite{GT, Y1}. Indeed, by \cite[Section 2.2]{Y1}, the higher Nash blowup of 
a toric variety is an equivariant morphism; in particular, it is the blowup of some monomial ideal. Then \cite[Corollary 34]{GT} implies the statement. We want to emphasize that the contribution of this section is that one can take the logarithmic Jacobian ideal of order $n$ as such monomial ideal. In addition, we describe an explicit method to construct this ideal. 
\end{rem}

%%%%% Logarithmic Jacobian ideal

\subsection{The logarithmic Jacobian ideal of order $n$ is independent of the generators of $\Gamma$}

In this subsection we show that the ideal $\I_n$ does not depend on the set of generators $A$ of $\Gamma$. To that end, we need 
to modify temporarily the notation $\I_n$. We denote as $\I_C$ the logarithmic Jacobian ideal of order $n$, where $C$ is an arbitrary 
set of generators of $\Gamma$.

\begin{teo}\label{log jac}
Let $A=\{a_1,\ldots,a_s\}\subset\Z^d$ and $B=\{b_1,\ldots,b_t\}\subset\Z^d$ be such that $\Gamma=\N A=\N B$. Then
$\I_A=\I_{A\cup B}=\I_B.$
In particular, the logarithmic Jacobian ideal of order $n$ of $\Xg$ does not depend on the generators of $\Ga$.
\end{teo}
\begin{proof}
It is enough to show $\I_A=\I_{A\cup B}$. Lemma \ref{inc 1} states that $\I_A\subset\I_{A\cup B}$. Applying repeatedly lemma \ref{inc 2} we obtain the other inclusion.
\end{proof}

\begin{lem}\label{inc 1}
With the notation of theorem \ref{log jac}, $\I_A\subset\I_{A\cup B}$.
\end{lem}
\begin{proof}
For $J\in S_A$, define $\bar{J}:=\{(\beta,0,\ldots,0)\in\N^{s+t}|\beta\in J\}.$ The submatrix of $D_x^n(\vp_{A\cup B})$ defined 
by $\bar{J}$ is the same as the submatrix of $D_x^n(\vpg)$ defined by $J$. Therefore $\bar{J}\in S_{A\cup B}$. Thus,
$X^{m_{J}}=X^{m_{\bar{J}}}\in\I_{A\cup B}.$
\end{proof}

\begin{lem}\label{inc 2}
Let $A$ be as in theorem \ref{log jac} and $b\in\N A$. Let $A'=A\cup\{b\}$. Then $\I_{A'}\subset\I_A$.
\end{lem}
\begin{proof}
Consider the following partition of $S_{A'}$:
\begin{align}
S_1&:=\{\bar{J}\in S_{A'}|\beta(s+1)=0 \mbox{ for all } \beta\in\bar{J}\},\notag\\
S_2&:=\{\bar{J}\in S_{A'}|\beta(s+1)>0 \mbox{ for some } \beta\in\bar{J}\}.\notag
\end{align}
By definition, $\I_{A'}=\langle\{X^{m_{\bar{J}}}|\bar{J}\in S_1\}\cup \{X^{m_{\bar{J}}}|\bar{J}\in S_2\}\rangle$. 
As in the proof of lemma \ref{inc 1}, $\{X^{m_{\bar{J}}}|\bar{J}\in S_1\}\subset\I_A$. We claim that 
$\{X^{m_{\bar{J}}}|\bar{J}\in S_2\}\subset \langle \{X^{m_{\bar{J}}}|\bar{J}\in S_1\}\rangle$, implying the lemma. Now, to prove the claim we show that for $\bar{J}\in S_2$ there exists $J\in S_1$ and $\bar{\gamma}\in\Ga$ such that $m_{\bar{J}}=m_J+\bar{\gamma}$. First, we need some notation.
\begin{itemize}
\item For $\gamma\leq \beta_i$, let $\epsilon_{\gamma}:=(-1)^{|\beta_i-\gamma|}\binom{\beta_i}{\gamma}$. Then, by definition,  
$c_{\beta_i,\alpha_j}=\sum_{\gamma\leq\beta_i,\gamma\neq0}\epsilon_{\gamma}\binom{A'\gamma}{\alpha_j}$ (see lemma \ref{coeficientes}).
\item $c_{\beta_i}:=\Big(\sum_{\gamma\leq\beta_i,\gamma\neq0}\epsilon_{\gamma}\binom{A'\gamma}{\alpha_j}\Big)_{1\leq j \leq \ld} (c_{\beta_i}\mbox{ is the }\beta_i\mbox{-th row of } L^c_{\bar{J}}).$
\item $v_{\gamma}:=\Big(\binom{A'\gamma}{\alpha_j}\Big)_{1\leq j \leq \ld}$. Notice that by remark \ref{delta}, $c_{\beta_i}=\sum_{\gamma\leq\beta_i,\gamma\neq0}\epsilon_{\gamma}v_{\gamma}$ (that remark is stated for toric curves but it also holds for toric varieties of any dimension).
\end{itemize}

Let $\bar{J}=\{\beta_1,\ldots,\beta_{\ld}\}\in S_2$. Then $\det(L_{\bar{J}}^c)\neq0$ and we can assume that $\beta_1(s+1)>0$. Then the following holds:

\begin{enumerate}  
\item There exists $\gamma'\leq\beta_1$ such that the matrix obtained by replacing the $\beta_1$th row of $L_{\bar{J}}^c$ by $v_{\gamma'}$ has non-zero determinant.
\item There exists $\delta_0\in\N^{s+1}$ such that $\delta_0(s+1)=0$ and $A'\delta_0=A'\gamma'$.
\item There exists $\delta\in\N^{s+1}$ such that $\delta\leq\delta_0$, $\delta(s+1)=0$, and the matrix having as rows $c_{\delta}, c_{\beta_2},\ldots,c_{\beta_{\ld}}$ has non-zero determinant.
\item Let $J_1:=\bar{J}\setminus\{\beta_1\}\cup\{\delta\}$. Then $J_1\in S_{A'}$ and $m_{\bar{J}}$ equals $m_{J_1}$ plus some element in $\Gamma$.
\end{enumerate}

Notice that by applying 1 - 4 to any element of $\bar{J}$ whose $(s+1)$-entry is greater than zero, we obtain $J\in S_1$ and $\bar{\gamma}\in\Gamma$ with the desired properties. Now we prove the previous statements.

\begin{enumerate}
\item It follows immediately from:
\[
0\neq\det(L_{\bar{J}}^c)=
\det\left( 
\begin{array}{cccc}
c_{\beta_1} \\
c_{\beta_2} \\
\vdots\\
c_{\beta_{\ld}}
\end{array} 
\right)=
\det\left( 
\begin{array}{cccc}
\sum_{\gamma\leq\beta_1,\gamma\neq0}\epsilon_{\gamma}v_{\gamma}      		\\
c_{\beta_2} 																\\
\vdots 																	\\
c_{\beta_{\ld}}	  	      		
\end{array} 
\right)
\]
\[
=\sum_{\gamma\leq\beta_1,\gamma\neq0}\epsilon_{\gamma}\det\left( 
\begin{array}{cccc}
v_{\gamma}      		\\
c_{\beta_2} 																\\
\vdots 																	\\
c_{\beta_{\ld}}	  	      		
\end{array} 
\right).
\]

\item If $\gamma'(s+1)=0$, let $\delta_0:=\gamma'$. Now suppose that $\gamma'(s+1)=k>0$. Since $b\in\N A$, $b=\sum_{l=1}^s\lambda_la_l$. Let $\delta_0(l):=\gamma'(l)+k\lambda_l$ for $l<s+1$ and $\delta_0(s+1)=0$. Then
$$A'\delta_0=\sum_{l=1}^s\delta_0(l)a_l=\sum_{l=1}^s(\gamma'(l)+k\lambda_l)a_l=\sum_{l=1}^s\gamma'(l)a_l+kb=A'\gamma'.$$
\item Let $M$ denote the matrix whose rows are $c_{\delta_0}, c_{\beta_2},\ldots,c_{\beta_{\ld}}$, in this order.
If $\det(M)\neq0$ let $\delta:=\delta_0$. Suppose that $\det(M)=0$. Then
\[
0=\det(M)=
\sum_{\gamma<\delta_0,\gamma\neq0}\epsilon_{\gamma}\det\left( 
\begin{array}{cccc}
v_{\gamma}      		\\
c_{\beta_2} 																\\
\vdots 																	\\
c_{\beta_{\ld}}	  	      		
\end{array} 
\right)+
\det\left( 
\begin{array}{cccc}
v_{\delta_0}      		\\
c_{\beta_2} 																\\
\vdots 																	\\
c_{\beta_{\ld}}	  	      		
\end{array} 
\right).
\]
On the other hand, $A'\delta_0=A'\gamma'$ implies $v_{\gamma'}=v_{\delta_0}$ and so
\[
0\neq
\det\left( 
\begin{array}{cccc}
v_{\gamma'}      		\\
c_{\beta_2} 																\\
\vdots 																	\\
c_{\beta_{\ld}}	  	      		
\end{array} 
\right)=
\det\left( 
\begin{array}{cccc}
v_{\delta_0}      		\\
c_{\beta_2} 																\\
\vdots 																	\\
c_{\beta_{\ld}}	  	      		
\end{array} 
\right).
\] 
Therefore
\[
0\neq
\sum_{\gamma<\delta_0,\gamma\neq0}\epsilon_{\gamma}\det\left( 
\begin{array}{cccc}
v_{\gamma}      		\\
c_{\beta_2} 																\\
\vdots 																	\\
c_{\beta_{\ld}}	  	      		
\end{array} 
\right).
\]
Thus there exists $\delta_1<\delta_0$ such that $\det(v_{\delta_1}\mbox{ }c_{\beta_2}\mbox{ }\cdots\mbox{ }c_{\beta_{\ld}})\neq0$. 
If $\det(c_{\delta_1}\mbox{ }c_{\beta_2}\mbox{ }\cdots\mbox{ }c_{\beta_{\ld}})\neq0$, let $\delta:=\delta_1$. Otherwise repeat the previous process. This leads to a sequence $\delta_0>\delta_1>\cdots$. Since this
sequence cannot decrease infinitely many times, we conclude that there exists $k\geq0$ such that $\delta_0>\delta_1>\cdots>\delta_k=:\delta$ and $\det(c_{\delta}\mbox{ }c_{\beta_2}\mbox{ }\cdots\mbox{ }c_{\beta_{\ld}})\neq0$. In addition, since $\delta\leq\delta_0$ and $\delta_0(s+1)=0$, we have $\delta(s+1)=0$.
\item To show that $J_1\in S_{A'}$ we only need to show that $|\delta|\leq n$ because we already know that $\det(L_{J_1}^c)\neq 0$. If $|\delta|>n$ then, by lemma \ref{l. basic properties} (ii), $c_{\delta}=0$, which contradicts that $\det(L_{J_1}^c)\neq 0$. 
On the other hand, we know that $\delta\leq\delta_0$ and $\gamma'\leq\beta_1$. Let $\delta_0=\delta+\delta'$ and $\beta_1=\gamma'+\gamma''$.
Then
$$A'\beta_1=A'\gamma'+A'\gamma''=A'\delta_0+A'\gamma''=A'\delta+A'\delta'+A'\gamma''.$$
This implies that $m_{\bar{J}}$ equals $m_{J_1}$ plus an element from $\Gamma$. 
\end{enumerate}
\end{proof}

%%%%%%%%%%%%%%%%%%%%%%%%%%%%%%%%%%%%
%		Higher Nash blowup toric curves
%%%%%%%%%%%%%%%%%%%%%%%%%%%%%%%%%%%%

\section{Higher Nash blowup of toric curves}\label{Nash toric curves}

In this section we study in detail the higher Nash blowup of toric curves. In this section we use the following notation: 
$A=\{a_1,\ldots,a_s\}\subset\N$, where $0<a_1<\ldots<a_s$ and $\gcd(a_1,\ldots,a_s)=1$. Let $\Gamma=\N A\subset\N$. We assume that $A$ is the minimal generating set of $\Gamma$. Let $\Xg\subset\K^s$ be the corresponding toric curve. 

According to theorem \ref{main s3}, $\Nag$ is isomorphic to the blowup of the ideal $\I_n$. Since $\Ga\subset\R_{\geq0}$, it follows that the Newton polyhedron of $\I_n$ has only one vertex $m_{J_0}=\min\{m_J|J\in S_A\}$. In particular, $\Nag$ is determined by a single semigroup. We denote it as:
$$\Nas:=\Ga+\N(\{m_J-m_{J_0}|J\in S_A\setminus\{J_0\}\}).$$

Let us show how this semigroup looks like for $n=1$. In this case, $S=\{e_1,\ldots,e_s\}$, where the $e_i's$ denote the canonical basis 
of $\N^s$, $m_{e_i}=a_i$, and so $\min_i\{m_{e_i}\}=a_1$. Therefore 
$$Nash_1(\Gamma)=\Ga+\N(\{a_k-a_1|k>1\}).$$ 

\begin{rem}
The previous description is a particular case of the combinatorial description of the Nash blowup of toric varieties given in \cite{GT, GM}
(see also \cite{DG}, where the Nash blowup of toric curves is studied in detail).
\end{rem}

We may ask the question: is there an explicit description for $\Nas$ as in $n=1$? T. Yasuda made the following conjecture in a more general context.

\begin{conj}\label{conj}\cite[Conjecture 5.6]{Y2}
Let $X$ be a formal curve with associated semigroup $\Gamma=\{0=s_0<s_1<\cdots\}$. Let $\Nas$ be the associated 
semigroup of $\Na$. Let $\Gan$ be the semigroup generated by $s_m-s_l$, where $l \leq n < m$. Then $\Nas=\Gan$.
\end{conj}

In what follows we prove that this conjecture is true for toric curves. However, in the final section we show that it fails in general.

In order to prove the conjecture in the toric case, first we need to study carefully some maximal minors of the higher-order Jacobian matrix.
In section \ref{s. monomial morphism} we defined, for 
$J=\{\beta_1,\ldots,\beta_n\}\subset\Ls$, the matrix $L_J^c=(c_{\beta_i,j})_{i,j}$, where 
$$c_{\beta_{i},j}=\sum_{\gamma\leq\beta_{i},\gamma\neq0}
(-1)^{|\beta_{i}-\gamma|}\binom{\beta_{i}}{\gamma}\binom{A\cdot\gamma}{j}.$$
Notice that in this case $A$ is a vector in $\N^s$ and $A\cdot\gamma$ is the usual dot product.

\begin{rem}\label{delta}
\begin{itemize}
\item[(a)] For a fixed $i$, every entry of the $i$-th row of $L_J^c$ has the same amount of summands and the same coefficients
$(-1)^{|\beta_{i}-\gamma|}\binom{\beta_{i}}{\gamma}$. In other words, for a fixed row of $L_J^c$, the amount of summands and 
coefficients of its entries do not depend on $j$. 
\item[(b)] Fix $i\in\{1,\ldots,n\}$. We rewrite the sums $c_{\beta_i,j}$ as follows:
$$c_{\beta_i,j}=\binom{s_{i,1}}{j}+t_{i,2}\binom{s_{i,2}}{j}+\cdots+t_{i,k_{i}}\binom{s_{i,k_{i}}}{j},$$
where $s_{i,1}:=A\cdot\beta_i$, $s_{i,l}\in\{A\cdot\gamma|\gamma\leq\beta_i,0\neq\gamma\neq\beta_i\}$ for $1<l\leq k_i$, 
and $t_{i,l}\in\Z$. Assume that $s_{i,1}>s_{i,2}>\ldots>s_{i,k_i}>0$. By (a), $\{s_{i,l}\}_l$, $\{t_{i,l}\}_l$ and $k_i$ 
do not depend on $j$. Therefore, the $i$-th row of $L_J^c$ can be written as:
\[
\small
\left(
\begin{array}{c}
\binom{s_{i,1}}{1}+t_{i,2}\binom{s_{i,2}}{1}+\cdots+t_{i,k_{i}}\binom{s_{i,k_{i}}}{1},\ldots,
\binom{s_{i,1}}{n}+t_{i,2}\binom{s_{i,2}}{n}+\cdots+t_{i,k_{i}}\binom{s_{i,k_{i}}}{n}
\end{array} 
\right).
\]
\end{itemize}
\end{rem}

Now we define some elementary operations on a matrix having the same shape as $L_{J}^{c}$. Given 
$k_i\in\N$ for $i\in\{1,\ldots,n\}$, and $s_{i,l}\in\N\setminus\{0\}$, $t_{i,l}\in\Q\setminus\{0\}$ for $l\in\{1,\ldots,k_i\}$, 
consider a matrix
$$D=
\left(
{\begin{array}{c}
t_{i,1}\binom{s_{i,1}}{j}+t_{i,2}\binom{s_{i,2}}{j}+\cdots+t_{i,k_{i}}\binom{s_{i,k_{i}}}{j}\\
\end{array}}
\right)_{\substack{1\leq i\leq n\\1\leq j\leq n}}.$$
Notice that if we fix $i$, the terms $k_{i},s_{i,l},t_{i,l}$ do not depend on $j$. Assume that
$s_{i,1}>s_{i,2}>\cdots>s_{i,k_{i}}$ 
for all $i\in\{1,\ldots,n\}$. Finally, let $R_i$ denote the $i$-th row of $D$. 

\begin{defi}
We say that $D$ satisfies $(\star)$ if there exist $i,i'\in\{1,\ldots,n\}$ such that $s_{i,1}=s_{i',1}$.
\end{defi}

Using the following algorithm we show that, under some assumptions, we can perform elementary operations on the rows of $D$
to obtain a matrix that does not satisfy the property $(\star)$.

\begin{algo}\label{algor1} Assume $\det D\neq0$ and that $D$ satisfies $(\star)$. 
\begin{enumerate}
\item Replace the row $R_{i}$ by $t_{i',1}R_{i}-t_{i,1}R_{i'}$.
\item Since $\det D\neq0$ the new row cannot be the vector $\bar{0}$. Write this new vector as:
$$R_i':=
\left( 
{\begin{array}{c}
t_{i,1}'\binom{s_{i,1}'}{j}+t_{i,2}\binom{s_{i,2}'}{j}+\cdots+t_{i,k_{i}'}'\binom{s_{i,k_{i}'}'}{j}\\
\end{array} } 
\right )_{1\leq j\leq n},$$
where $t_{i,l}'\neq0$ for all $l\in\{1,\ldots,k_{i}'\}$ and $s_{i,1}'>\cdots>s_{i,k_{i}'}'$. Notice that
$s_{i,1}>s_{i,1}'>0.$
\item Let 
$$D':=
\left( {\begin{array}{c}
R_{1}\\
\vdots\\
R_{i}'\\
\vdots\\
R_{n}\\
\end{array} } 
\right ).$$
\begin{itemize}
\item[(i)] If there exists $i''\in\{1,\ldots,n\}\setminus\{i\}$ such that $s_{i,1}'=s_{i'',1}$, then apply step 1 to $R'_i$. As before, 
we obtain a new element $s_{i,1}''\in\N$ such that $s_{i,1}>s_{i,1}'>s_{i,1}''>0$. 
\item[(ii)] If there is no $i''\in\{1,\ldots,n\}\setminus\{i\}$ such that $s_{i,1}'=s_{i'',1}$ then stop.
\end{itemize}
\end{enumerate}
Because of the decreasing sequence $s_{i,1}>s_{i,1}'>s_{i,1}''>\cdots$, this algorithm must stop and it produces a new row 
that looks like
$$\left(
{\begin{array}{c}
u_{i,1}\binom{r_{i,1}}{j}+u_{i,2}\binom{r_{i,2}}{j}+\cdots+u_{i,m_{i}}\binom{r_{i,m_{i}}}{j}\\
\end{array} } 
\right)_{1\leq j\leq n},$$
where $u_{i,l}\neq0$ for all $l\in\{1,\ldots,m_{i}\}$, $r_{i,1}>\cdots>r_{i,m_{i}}>0$ and $r_{i,1}\neq s_{l,1}$ for all 
$l\in\{1,\ldots,n\}\setminus\{i\}$.
\end{algo}

Applying this process every time that the new matrix satisfies $(\star)$, we finally get a matrix $\overline{D}$ 
$$\overline{D}=
\left(
{\begin{array}{c}
u_{i,1}\binom{r_{i,1}}{j}+u_{i,2}\binom{r_{i,2}}{j}+\cdots+u_{i,m_{i}}\binom{r_{i,m_{i}}}{j}\\
\end{array}}
\right)_{\substack{1\leq i\leq n\\1\leq j\leq n}},$$
such that $r_{i,1}>\cdots>r_{i,m_{i}}$ for each $i$ and $r_{i,1}\neq r_{i',1}$ for all $i\neq i'$.

\begin{exam}Consider the following matrix:
$$D=\left(
{\begin{array}{ccc}
\binom{2}{1}&\binom{2}{2}&\binom{2}{3}\\
\binom{6}{1}-2\binom{3}{1}&\binom{6}{2}-2\binom{3}{2}&\binom{6}{3}-2\binom{3}{3}\\
\binom{6}{1}-3\binom{4}{1}+3\binom{2}{1}&\binom{6}{2}-3\binom{4}{2}+3\binom{2}{2}&\binom{6}{3}-3\binom{4}{3}
+3\binom{2}{3}\\
\end{array}}
\right ).$$
Notice that $D$ satisfies $(\star)$. Applying algorithm \ref{algor1} to the third row we obtain the matrix:
$$\overline{D}=
\left(
{\begin{array}{ccc}
\binom{2}{1}&\binom{2}{2}&\binom{2}{3}\\
\binom{6}{1}-2\binom{3}{1}&\binom{6}{2}-2\binom{3}{2}&\binom{6}{3}-2\binom{3}{3}\\
-3\binom{4}{1}+2\binom{3}{1}+3\binom{2}{1}&-3\binom{4}{2}+2\binom{3}{2} +3\binom{2}{2}&-3\binom{4}{3}
+2\binom{3}{3}+3\binom{2}{3}\\
\end{array}}
\right ).$$
\end{exam}

%%%%%% Partial description

\subsection{A partial description of $\Nag$}\label{partial description}

The first step towards proving conjecture \ref{conj} for toric curves is to determine $\min_{J\in S_A}\{m_J\}$. Recall that for 
$J=\{\beta_1,\ldots,\beta_n\}\subset\Ls$, we defined $m_J=A\cdot\beta_1+\cdots+A\cdot\beta_n$. On the 
other hand, $A\cdot\beta_i\in\Gamma$ since $A$ is the vector formed by the generators of $\Gamma$.
Therefore, it is natural to expect that $\min_{J\in S_A}\{m_J\}=\sum_{i=1}^ns_i$. The goal of this subsection is to
prove that this is indeed the case.

\begin{pro}\label{minimo J}
Let $J\subset\Ls$, $|J|=n$, then $\min_{J\in S_A}\{m_{J}\}=\sum_{i=1}^{n}s_{i}$.
\end{pro}
\begin{proof}
This is proved in lemmas \ref{l. zero} and \ref{u. bound}.
\end{proof}

This proposition gives the following partial description of $\Nas$.
\begin{coro}\label{str. nash}
$Nash_{n}(\Ga)=\Ga+\N(\{m_{J}-\sum_{i=1}^{n}s_{i}|J\in S_A\}).$
\end{coro}

\begin{lem}\label{l. zero} Let $J\in S_A$. Then $m_J\geq s_1+\cdots+s_n$. 
In particular, $\min_{J\in S_A}\{m_J\}\geq\sum_{i=1}^ns_i$.
\end{lem}

\begin{proof}
Let $J=\{\be_{1},\be_{2},\ldots,\be_{n}\}$. Using (b) of remark \ref{delta} we have
$$L_{J}^{c}=
\left(
{\begin{array}{c}
\binom{s_{i,1}}{j}+t_{i,2}\binom{s_{i,2}}{j}+\cdots+t_{i,k_{i}}\binom{s_{i,k_{i}}}{j}\\
\end{array}}
\right)_{\substack{1\leq i\leq n\\1\leq j\leq n}},$$
where $s_{i,l}\in\Ga$ for each $l$, $s_{i,1}=A\cdot \beta_i$, and $s_{i,1}>s_{i,2}>\cdots>s_{i,k_i}$. 
If $s_{i,1}\neq s_{i',1}$ for all $1\leq i\neq i'\leq n$ then the statement follows.

Now suppose that there exist $\be_{i},\be_{i'}\in J$, $i\neq i'$, such that $s_{i,1}=s_{i',1}$, i.e., 
$L_{J}^{c}$ satisfies $(\star)$. Since $J\in S_A$, i.e., $\det(L_J^c)\neq0$, we can apply algorithm \ref{algor1} 
to obtain some elements $r_{1,1},\ldots,r_{n,1}\in\Ga$ satisfying $r_{i,1}\neq r_{i',1}$ for all $i\neq i'$ and $s_{i,1}>r_{i,1}$ for 
some $i\in\{1,\ldots,n\}$. Under these conditions we have
$$m_{J}=\sum_{i=1}^{n} A\cdot\beta_i=\sum_{i=1}^{n}s_{i,1}>\sum_{i=1}^{n}r_{i,1}\geq\sum_{i=1}^{n}s_{i}.$$
\end{proof}

To show that $\min_{J\in S_A}\{m_{J}\}=\sum_{i=1}^{n}s_{i}$ we need to show that, if $J=\{\beta_1,\ldots,\beta_n\}\subset\Ls$ is
such that $A\cdot\beta_i=s_i$, then $J\in S_A$. In other words, we need to study $J$'s such that $\det(L_J^c)\neq0$. We do not have
a characterization of such $J$'s. However, in the following definition and lemma we give sufficient conditions for $J$ to be in $S_A$.

\begin{defi}\label{aster}
Let $J\subset\N^s$ be a finite subset. We say that $J$ satisfies $(\ast)$ if the following conditions hold:
\begin{itemize}
\item[1)]For all $\beta,\beta'\in J$ such that $\beta\neq\beta'$, it holds $A\cdot\beta\neq A\cdot\beta'$.
\item[2)] For all $\beta\in J$ and $0\neq\gamma<\beta$, there exists $\beta'\in J$ such that $A\cdot\gamma=A\cdot\beta'$.
\end{itemize}
\end{defi}

\begin{exam}\label{exam aster}
\begin{itemize}
\item[(i)] Let $J=\{e_{j},2e_{j},\ldots,ne_{j}\}$, where $e_{j}$ is a basic vector. Then $J$ satisfies $(\ast)$. Indeed, 1) follows 
by definition and 2) by the definition of $<$.
\item[(ii)] Let $J=\{\be_{1},\be_{2},\ldots,\be_{n}\}$ be such that $A\cdot\be_{i}=s_{i}$. Then $J$ satisfies $(\ast)$. Indeed,
1) follows by definition and 2) follows from the fact that $\gamma<\beta$ implies $A\cdot\gamma<A\cdot\beta$.
\end{itemize}
\end{exam}

\begin{rem}\label{j0}
Let $\be\in\N^{s}$ be such that $|\be|\geq n+1$. Then $A\cdot\be> na_{1}\geq s_{n}$. This implies that
for $J=\{\be_{1},\ldots,\be_{n}\}\subset\N^{s}$ such that $A\cdot\be_{i}=s_{i}$ for each $i$, we have
$J\subset\Lambda_{s,n}$.
\end{rem}

\begin{lem}\label{u. bound}
Let $J\subset \Lambda_{s,n}$, $|J|=n$. If $J$ satisfies $(\ast)$ then $J\in S_A$. 
In particular, $\min_{J\in S_A}\{m_J\}\leq\sum_{i=1}^ns_i$.
\end{lem}
\begin{proof}
The last statement follows from (ii) of example \ref{exam aster} and remark \ref{j0}. Let $J=\{\beta_1,\ldots,\beta_n\}$ satisfies 
$(\ast)$. In particular, $A\cdot\beta_i\neq A\cdot\beta_{i'}$ for all $i\neq i'$. Assume $A\cdot\beta_1<\cdots<A\cdot\beta_n$.  
By (b) of remark \ref{delta}, we can rewrite the matrix $L_{J}^{c}$ as
$$\left(
{\begin{array}{c}
c_{\be_{i},j} \\
\end{array}}
\right)_{\substack{1\leq i\leq n\\1\leq j\leq n}}
=
\left(
{\begin{array}{c}
\binom{s_{i,1}}{j}+t_{i,2}\binom{s_{i,2}}{j}+\cdots+t_{i,k_{i}}\binom{s_{i,k_{i}}}{j}\\
\end{array} }
\right)_{\substack{1\leq i\leq n\\1\leq j\leq n}},$$
where $s_{i,1}=A\cdot\be_{i}$, $s_{i,1}>s_{i,2}>\cdots$, and $s_{i,l}=A\cdot \ga$ for some $\ga\leq\be_{i}$.

Now we do some elementary operations on the $n$-th row of $L_J^c$:
$$\left( {\begin{array}{c}
 \binom{s_{n,1}}{j}+t_{n,2}\binom{s_{n,2}}{j}+\cdots+t_{n,k_{n}}\binom{s_{n,k_{n}}}{j} \\
\end{array} } \right )_{1\leq j\leq n}.$$
We know that $s_{n,2}=A\cdot\ga$ for some $\ga<\be_{n}$. Since $J$ satisfies $(\ast)$, there exists $\be_{j_0}\in J$ such that 
$A\cdot\be_{j_0}=A\cdot\ga=s_{n,2}$. Then we sustract $t_{n,2}$-times the row $j_0$ to the row $n$, thus obtaining
$$\left( {\begin{array}{c}
 \binom{s_{n,1}'}{j}+t_{n,2}'\binom{s_{n,2}'}{j}+\cdots+t_{n,k_{n}'}'\binom{s_{n,k_{n}'}'}{j} \\
\end{array} } \right )_{1\leq j\leq n},$$
where $s'_{n,1}=s_{n,1}$, $s'_{n,2}>s'_{n,3}>\cdots$, and $s_{n,2}>s_{n,2}'$. Now we have $s_{n,2}'=A\cdot\ga'$ for some 
$\ga'<\be_{n}$ or some $\ga'<\be_{j_0}$. Once again, by $(\ast)$ we can repeat the previous process to obtain a new element
$s''_{n,2}$ such that $s_{n,2}>s'_{n,2}>s''_{n,2}$. Because of this decreasing sequence of natural numbers, the iteration of this 
process must stop turning the $n$-th row into
$$\left(
{\begin{array}{c}
 \binom{s_{n,1}}{j} \\
\end{array}}
\right )_{1\leq j\leq n}.$$

Applying this process to the other rows of $L_J^c$ in an ascending way we obtain the matrix
$$\left(
{\begin{array}{c}
 \binom{s_{i,1}}{j} \\
\end{array}}
\right )_{\substack{1\leq i\leq n\\1\leq j\leq n}}.$$
Notice that $s_{i,1}=A\cdot\beta_i\neq A\cdot\beta_{i'}=s_{i',1}$ for all $i\neq i'$. The following lemma 
shows that this matrix has non-zero determinant, thus concluding that $J\in S$.
\end{proof}

\begin{lem}\label{lema matriz}
Let $0<c_{1}<c_{2}<\cdots<c_{n}$ be natural numbers. Consider the matrix
$L=\left(
{\begin{array}{c}
\binom{c_{i}}{j}\\
\end{array}}
\right )_{\substack{1\leq i\leq n\\1\leq j\leq n}}.$
Then $\det L\neq0$.
\end{lem}
\begin{proof}
For $j\in\{1,\ldots,n\}$, consider the polynomial $b_{j}(x)=\frac{x(x-1)\cdots(x-j+1)}{j!}$. Notice that if $x\in\N$, 
$b_{j}(x)=\binom{x}{j}$ and $\deg{b_{j}}(x)=j$. Thus 
$$L=
\left(
{\begin{array}{c}
\binom{c_{i}}{j}\\
\end{array}}
\right )_{\substack{1\leq i\leq n\\1\leq j\leq n}}
=\left(
{\begin{array}{c}
b_{j}(c_{i})\\
\end{array}}
\right )_{\substack{1\leq i\leq n\\1\leq j\leq n}}.$$
Now we show that the columns of this matrix are linearly independent. Let $\al_{1},\ldots,\al_{n}\in\R$ be such that 
$\sum_{j=1}^{n}\al_{j}b_{j}(c_{i})=0$, for each $i\in\{1,\ldots,n\}$. Let $f(x)=\sum_{j=1}^{n}\al_{j}b_{j}(x)$. Then
$\{c_1,\ldots,c_n\}$ are roots of $f(x)$. But we also have $f(0)=0$. Since $\deg f(x)\leq n$ it follows that $f(x)=0$. As 
$\deg{b_{j}(x)}=j$ for each $j$, we conclude that $\al_{j}=0$ for all $j$. In particular, $\det L\neq 0$.
\end{proof}

\subsection{Proof of conjecture \ref{conj} for toric curves and some consequences}

Now we are ready to prove the main theorem of this section. Recall that by definition and corollary \ref{str. nash}:
\begin{align}
\Ga^{(n)}&=\N(\{s_m-s_l|m>n,l\leq n\}),\notag\\
\Nas&=\Ga+\N(\{m_{J}-\sum_{l=1}^{n}s_l|J\in S_A\}).\notag
\end{align}

\begin{teo}\label{main t}
$\Ga^{(n)}=Nash_{n}(\Ga)$.
\end{teo}
\begin{proof}
This is proved in propositions \ref{contencion} and \ref{equality sem}.
\end{proof}

\begin{pro}\label{contencion}$Nash_{n}(\Ga)\subset \Ga^{(n)}$.
\end{pro}
\begin{proof}
By corollary \ref{str. nash}, it is enough to show that $a_i\in\Ga^{(n)}$ for each $i\in\{1,\ldots,s\}$ and $m_J-\sum_{l=1}^ns_l\in\Ga^{(n)}$ for each $J\in S_A$.

We first prove $a_i\in\Ga^{(n)}$. For $a_{i}\leq s_{n}$ there exists $m\in\N$ such that $ma_{i}\leq s_{n}<(m+1)a_{i}$. 
Then $a_i=(m+1)a_{i}-ma_i\in\Ga^{(n)}$. If $a_{i}\geq s_{n}$ then $a_{i}+a_{1}>s_{n}$, and 
$a_{i}=(a_{i}+a_{1})-a_{1}\in\Ga^{(n)}$.

Now we prove that $m_{J}-\sum_{l=1}^{n}s_{l}\in\Ga^{(n)}$ for each $J\in S_A$. Consider 
$J=\{\be_{1},\be_{2},\ldots,\be_{n}\}\in S_A$, 
let $s_{i,1}:=A\cdot\be_{i}$ and assume $s_{1,1}\leq\cdots\leq s_{n,1}$. Let $k:=\max\{l\in\{1,\ldots,n\}|s_{l,1}\leq s_{n}\}$. 

\noindent\textbf{Case I:} Suppose that $s_{1,1}<s_{2,1}<\cdots<s_{k,1}\leq s_n$. Let $\psi=\{s_1,\ldots,s_n\}\setminus\{s_{1,1},\cdots,s_{k,1}\}$.
Write $\psi=\{r_{k+1},\ldots,r_n\}$. By definition of $k$ and $\psi$ we obtain:
$$m_J-\sum_{l=1}^{n}s_{l}=\sum_{l=1}^{n}s_{l,1}-\sum_{l=1}^{n}s_{l}=\sum_{l=k+1}^{n}s_{l,1}-\sum_{l=k+1}^n r_{l}\in\Gan.$$

\noindent\textbf{Case II: } Suppose that there exist $i,i'\leq k$ such that $s_{i,1}=s_{i',1}$. We claim that for all $j\leq k$ there exist $r_{j,1}\in\Ga$ such that $s_{j,1}-r_{j,1}\in\Ga^{(n)}$ and $r_{j,1}\neq r_{j',1}$ for all $j\neq j'$. Assume this claim for the moment. 
For the elements $s_{m,1}$ with $m>k$, we have that $s_{m,1}>s_n$ and so $s_{m,1}-s_{l}\in\Ga^{(n)}$ for any $l\leq n$. 
Let $\psi=\{s_1,\ldots,s_n\}\setminus\{r_{1,1},\ldots,r_{k,1}\}$. Write $\psi=\{r_{k+1},\ldots,r_n\}$. As in the previous case, we conclude that
$$m_J-\sum_{l=1}^{n}s_{l}=\Big(\sum_{l=1}^{k}s_{l,1}-\sum_{l=1}^{k}r_{l,1}\Big)
+\Big(\sum_{l=k+1}^{n}s_{l,1}-\sum_{l=k+1}^n r_{l}\Big)\in\Gan.$$

Now we prove the claim. Since $J\in S_A$, we can apply algorithm \ref{algor1} to any pair of rows of 
$L_J^c$, $i,i'\leq k$ such that $s_{i,1}=s_{i',1}$, to get a matrix 
$$\overline{D}=
\left(
{\begin{array}{c}
u_{i,1}\binom{r_{i,1}}{j}+u_{i,2}\binom{r_{i,2}}{j}+\cdots+u_{i,m_{i}}\binom{r_{i,m_{i}}}{j}\\
\end{array}}
\right )_{\substack{1\leq i\leq n\\1\leq j\leq n}},$$
where $r_{i,1}\neq r_{i',1}$ for all $i,i'\leq k$. Let us show that $s_{i,1}-r_{i,1}\in\Ga^{(n)}$ for all $i\in\{1,\ldots,k\}$.
We can assume that $s_{i,1}\neq r_{i,1}$.

In the first run of the algorithm we obtain an element $s'_{i,1}\in\Gamma$ such that $s_{i,1}>s_{i,1}'\geq r_{i,1}$ and
$s'_{i,1}=A\cdot\ga$ for some $\ga<\be_{i}$ or some $\ga<\be_{i'}$. This implies that $s_{i,1}-s_{i,1}'\in\Ga$.

On the other hand, we know that $s_{i,1}'\geq r_{i,1}$ and $r_{i,1}\in\Ga$. Therefore $s_{i,1}-s'_{i,1}+r_{i,1}\in\Ga$ and 
$s_{i,1}-s'_{i,1}+r_{i,1}\leq s_{n}$. Consider the following set $\phi_{i}:=\{s_{l}\in\Ga\setminus\{0\}| s_{l}+r_{i,1}\leq s_{n}\}$. 
This set is not empty since $s_{i,1}-s_{i,1}'\in\phi_{i}$. Let $s_{t}:=\max\phi_{i}$. If $s_{i,1}+s_{t}\leq s_{n}$, we have 
that $(s_{i,1}-s_{i,1}'+s_{t})+r_{i,1}=s_{i,1}+s_t-(s_{i,1}'-r_{i,1})\leq s_{i,1}+s_{t}\leq s_{n}$ and $s_{i,1}-s_{i,1}'+s_{t}>s_{t}$, 
which contradicts the maximality of $s_{t}$. Thus $s_{i,1}+s_t>s_{n}$ and
$$s_{i,1}-r_{i,1}=(s_{i,1}+s_t)-(s_{t}+r_{i,1})\in\Ga^{(n)}.$$
\end{proof}

We need the following lemma to prove the remaining inclusion in theorem \ref{main t}.

\begin{lem}\label{beta} 
Let $s_{m},s_{i}\in\Ga$ be such that $m>n\geq i$ and $s_{m}-s_{i}\notin\Ga$. Let $\beta_{m}\in\N^{s}$ be such that
$A\cdot\beta_{m}=s_{m}$. Then there exists $\beta_0\leq\beta_{m}$ such that $A\cdot\beta_0>s_{n}$ and $|\beta_0|\leq n$.
\end{lem}
\begin{proof}
If $|\be_{m}|\leq n$ then $\be_{m}$ satisfies the conditions of $\be_0$. Assume that $|\be_{m}|>n$.

Suppose first that $a_{2}\leq s_{n}$. The set $\{a_{1},2a_{1}, \ldots,(n-1)a_{1},a_{2}\}$ has $n$ different elements of 
$\Gamma$ implying $na_{1}>s_{n}$. Let $\be$ be such that $|\be|=n$. Then $A\cdot\be\geq ns_{1}=na_{1}>s_{n}$. 
In particular, any $\be\leq\be_{m}$ such that $|\be|=n$ satisfies the conditions on the lemma.

Now suppose $s_{n}<a_{2}$. Then $s_{k}=ks_{1}$ for all $k\leq n$. Notice that if $\be_{m}(j)=0$ for all $j>1$, then 
$s_{m}-s_{i}=|\be_{m}|a_{1}-ia_{1}\in\Ga$ which contradicts the hypothesis. Thus there exists $j>1$ such that 
$\be_{m}(j)\neq 0$. Consider $\be_0=e_{j}$, then $A\cdot\be_0=A\cdot e_{j}=a_{j}\geq a_{2}>s_{n}$ and $|\be_0|=1\leq n$.
\end{proof}

\begin{pro}\label{equality sem}$\Ga^{(n)}\subset Nash_{n}(\Ga)$.
\end{pro}
\begin{proof}
Throughtout this proof we fix $m,i\in\N$ such that $m>n\geq i$. Let $s_{m}-s_{i}\in\Ga^{(n)}$. 

\noindent\textbf{Case I:} Suppose that $s_{m}-s_{i}\in\Ga$. Then $s_{m}-s_{i}\in\Nas$ by definition.

\noindent\textbf{Case II:} Suppose that $s_{m}-s_{i}\notin\Ga$. Fix $\be_m\in\N^s$ such that $A\cdot \be_m=s_m$. 

We claim that there exist $\be_0\in\N^s$, $J_0=\{\be_1,\ldots,\be_n\}\subset\N^s$, and $i\leq l \leq n$ such that:
\begin{itemize}
\item[(1)] $\beta_0\leq\beta_m$.
\item[(2)] $A\cdot\be_j=s_j$ for each $j\in\{1,\ldots,n\}$.
\item[(3)] $s_l-s_i\in\Ga$.
\item[(4)] $J:=(J_{0}\setminus\{\be_{l}\})\cup\{\be_0\}$ satisfies $(\ast)$. In particular, 
$$A\cdot\be_0-s_l=m_J-\sum_{i=1}^ns_i\in Nash_n(\Ga).$$
\end{itemize}
Assume this claim for the moment. Let $\delta\in\N^s$ be such that $\be_m=\be_0+\delta$. In particular, $s_m=A\cdot{\be_m}=A\cdot{\be_0}+A\cdot{\delta}$. Then, since $A\cdot\delta\in\Ga$, we conclude
$$s_{m}-s_{i}=(A\cdot\be_0-s_{l})+A\cdot\delta+(s_{l}-s_{i})\in Nash_{n}(\Ga).$$

Now we prove the claim. We first show that there is a $\be_0\in\N^s$ satisfying (1) and some extra conditions needed for the proof of (4). Let $T:=\{\ga\in\N^{s}|\ga\leq\be_{m}\}$. We write this set as $T=T_{\leq}\sqcup T_{>}$, where 
\begin{align}
T_{\leq}&:=\{\ga\in T|A\cdot\ga\leq s_{n}\},\notag\\
T_{>}&:=\{\ga\in T|A\cdot\ga>s_{n}\}.\notag
\end{align}
Notice that $\be_{m}\in T_{>}$. Let $\be_0\leq\be_{m}$ be a minimal element in $T_{>}$ such that $\be_0\in\Lambda_{s,n}$ 
(such an element exists by lemma \ref{beta}). By construction, $\be_0$ has the following properties:
\begin{itemize}\item[a)]For all $\ga<\be_0$, $\ga\in T_{\leq}$.
\item[b)]For all $\bar{\be_i}$ such that $A\cdot\bar{\be_i}=s_{i}$ it holds $\be_0\ngtr\bar{\be_i}$ (this is true because 
$\beta_m\geq\beta_0$ and $s_{m}-s_{i}\notin\Ga$ implies that $\be_{m}\ngtr\bar{\be_{i}}$).
\end{itemize} 

Now we prove the existence of $J_0=\{\be_1,\ldots,\be_n\}\subset\N^s$, and $i\leq l \leq n$ satisfying (2) and (3) and some extra conditions needed for the proof of (4). Define the set (recall that $i$ is fixed):
$$\Omega:=\{s_{j}\in\{s_{1},\ldots,s_{n}\}|\forall\be'\mbox{ such that }A\cdot\be'=s_{j},\exists
\ga\leq\be'\mbox{ such that }A\cdot\ga=s_{i}\}.$$
If $s_{j}\in\{s_{1},\ldots,s_{n}\}\setminus\Omega$, consider $\be_{j}\in\N^{s}$ such that $A\cdot\be_{j}=s_{j}$ and for all 
$\ga\leq\be_{j}$, $A\cdot\ga\neq s_{i}$. If $s_{j}\in\Omega$, consider any $\be_{j}\in\N^{s}$ such that $A\cdot\be_{j}=s_{j}$. 
Let $J_{0}:=\{\be_{1},\ldots,\be_{n}\}\subset\N^{s}$. By remark \ref{j0}, we have that $J_{0}\subset\Lambda_{s,n}$.
Notice that $\Omega\neq\emptyset$, since $s_{i}\in\Omega$. Let $s_{l}:=\max\{\Omega\}$. In particular, $s_l\in\Omega$ and so $s_{l}-s_{i}\in\Ga$.

It remains to prove that $J:=(J_{0}\setminus\{\be_{l}\})\cup\{\be_0\}$ satisfies $(\ast)$ (see definition \ref{aster}). By construction and since $A\cdot\be_0> s_{n}$, we have 1) in the definition of $(\ast)$. 

Now let $\be_{k}\in J_{0}\setminus\{\be_{l}\}$. We want to show that if $\gamma<\beta_k$ then
there exists $\beta'\in J$ such that $A\cdot\beta'=A\cdot\gamma$. If $k<l$ this condition is satisfied (see example \ref{exam aster}). 
Suppose $k>l$ (in particular, $s_k\notin\Omega$). If $\ga<\be_{k}$ is such that $A\cdot\ga=s_j\neq s_{l}$ then by 
making $\beta'=\beta_j$ the condition is satisfied. Suppose that  $A\cdot\ga=s_{l}$. Since $s_{l}\in\Omega$ there exists 
$\ga'\leq\ga$ such that $A\cdot\ga'=s_{i}$. Since $\ga<\be_{k}$, it follows that $\ga'<\be_{k}$. This is a contradiction since
$\beta_k$ was chosen so that for all $\delta<\beta_k$ we have $A\cdot\delta\neq s_i$. Therefore, for all  $\ga\leq\be_{k}$,
$A\cdot\ga\neq s_{l}$. This shows that every element of $J_{0}\setminus\{\be_{l}\}$ satisfies 2) in the definition of $(\ast)$. 

Now consider $\ga<\be_0$. By property a) above, we have that $A\cdot\ga\leq s_{n}$. 
If $\ga<\be_0$ is such that $A\cdot\ga=s_j\neq s_{l}$ then,
as before, by making $\beta'=\beta_j$ the condition is satisfied. Suppose that  $A\cdot\ga=s_{l}$. As before, there exists 
$\ga'\leq\ga$ such that $A\cdot\ga'=s_{i}$. Since $\ga<\be_0$, it follows that $\ga'<\be_0$. This contradicts property b) above.
We conclude that $J$ satisfies $(\ast)$. 
\end{proof}

Theorem \ref{main t} has two immediate consequences. The first one is about solving toric curves by applying once the higher 
Nash blowup for $n$ sufficiently large. This gives a combinatorial proof of Yasuda's theorem on one-step resolution of curves by 
higher Nash blowups in the case of toric curves. The second result is the analogous of Nobile's theorem for the higher Nash blowup 
of toric curves.

\begin{coro}\label{yasuda teo}
$Nash_{n}(X_{\Ga})$ is non-singular if and only if $s_{n}+1\in\Ga$. In particular, $Nash_{n}(X_{\Ga})$ is non-singular for $n\gg 0$.
\end{coro}
\begin{proof}
Notice that for all $m>n$ and $i\leq n$, we have that $s_{n+1}+s_{i}\leq s_{m}+s_{n}$, then $s_{n+1}-s_{n}\leq s_{m}-s_{i}$. 
Thus, $s_{n+1}-s_{n}=\min \{\Ga^{(n)}\setminus\{0\}\}=\min \{Nash_{n}(\Ga)\setminus\{0\}\}$. Then $Nash_{n}(X_{\Ga})$ 
is non-singular if and only if $Nash_{n}(\Ga)=\N(\{1\})$ if and only if $1=s_{n+1}-s_{n}$ if and only if $s_{n}+1=s_{n+1}\in\Ga$. 
\end{proof}

\begin{coro}\label{nobile}
$\Nag\cong\Xg$ if and only if $\Xg$ is non-singular.
\end{coro}
\begin{proof}
Suppose that $\Xg$ is singular, i.e., $1<a_{1}$. We are going to show that $\Gamma\subsetneq\Nas$, which implies 
$\Nag\not\cong\Xg$.

Let $a_{2}=qa_{1}+r$, where $0<r<a_{1}$ and $q\geq1$. With this notation we have $s_1=a_1,\ldots,s_q=qa_1,s_{q+1}=a_2$.
If $n\leq q$ then $s_{n}\leq s_{q}=qa_{1}<a_{2}$ and so $a_2-a_1\in\Gan=\Nas$. But we also have 
$a_{2}-a_{1}=(q-1)a_{1}+r\notin\Ga$. 

Suppose that $n>q$. Consider the following subset of $\Gamma$:
$$\{s_{q+1}=qa_{1}+r,(q+1)a_{1},(q+1)a_{1}+r,(q+2)a_{1},(q+2)a_{1}+r,\ldots\}.$$
The elements on this subset are not necessarily consecutive elements in $\Gamma$. Therefore, for $p>q$ 
it follows $s_{p+1}-s_{p}\leq\max\{a_{1}-r,r\}<a_{1}$. In particular, $s_{n+1}-s_{n}<a_{1}$. Thus, 
$s_{n+1}-s_{n}\in Nash_{n}(\Ga)$ but $s_{n+1}-s_{n}\notin\Ga$.
\end{proof}

%%%%%%%%%%%%%%%%%%%%%%%%%%%%%%%%%%%%
%		Counterexample
%%%%%%%%%%%%%%%%%%%%%%%%%%%%%%%%%%%%

\section{Counterexample to the conjecture}

In section \ref{Nash toric curves} we stated and proved a conjecture by T. Yasuda for toric curves. In this section we exhibit a family of non-monomial curves showing that the conjecture is false in general. 

\begin{exam}\label{counter}
Consider the plane curve $C\subset\C^2$ parametrized by 
$$t\mapsto(t^{4},t^{6}+t^{7}).$$ 
The associated semigroup of $C$ is $\Ga=\{0,4,6,8,10,12,13,14,m|m\geq 16\}$. Yasuda's conjecture states that the semigroup
of $Nash_1(C)$ is $\Gamma^{(1)}=\N(2,9)$.
However, the Nash blowup of order 1 of $C$ is parametrized by 
$$t\mapsto\Big(t^{4},t^{6}+t^{7},\frac{6}{4}t^{2}+\frac{7}{4}t^{3}\Big).$$
Using the first and third terms of the parametrization we obtain $Nash_1(\Gamma)=\N(2,5)$. We conclude that 
$Nash_1(\Gamma)\neq\Gamma^{(1)}$.
\end{exam}

We may still ask whether the conjecture holds for $n\gg0$. In what follows we construct a family of plane curves 
$\{C_n\}_{n\geq1}$, with numerical semigroup $\Ga_n$, such that $Nash_n(\Ga_n)\neq(\Ga_n)^{(n)}$.

Fix $n\geq1$. Consider the plane curve $C_n$ with parametrization
$$\varphi(t)=(t^{4},t^{4n+2}+t^{4n+3}).$$ 
Let $\Gamma_n$ be the corresponding semigroup. Notice that the first $n$ non-zero terms of $\Gamma_n$ is the set 
$\{4,8,\ldots,4n\}$. In addition, the first odd number that appears in $\Gamma_n$ is $8n+5$ (it appears as the order of 
$(t^{4n+2}+t^{4n+3})^2-(t^4)^{2n+1}$). In particular, the first odd number that appears in $(\Ga_n)^{(n)}$ is 
$8n+5-4n=4n+5$. We claim that $5\in Nash_n(\Gamma_n)$ implying that $Nash_n(\Ga_n)\neq(\Ga_n)^{(n)}$.

To prove the claim we need to compute some maximal minors of the matrix
$$\Big(\frac{1}{\alpha!}\frac{\partial^{\alpha}(\vp-\vp(t))^{\beta}}{\partial T^{\alpha}}|_t\Big)
_{\beta\in\Lambda_{2,n}, \alpha\in\Lambda_{1,n}}$$
Let $J_1=\{e_1,2e_1,\ldots,ne_1\}$ and $J_2=\{e_1,2e_1\ldots,(n-1)e_1,e_2\}$. 
We first show that the minors of the submatrices defined by $J_{1}$ and $J_{2}$ are not zero.

Let $L_{J_1}$ be the submatrix defined by $J_1$. Notice that the rows of $L_{J_1}$ only consider the first term of $\varphi(t)$,
which is a monomial. Therefore, by example \ref{exam aster} and lemma \ref{u. bound}, $\det L_{J_{1}}\neq0$. In addition, 
by proposition \ref{p. minors}, $\det L_{J_{1}}=c\cdot t^{\sum_{k=1}^{n}4k-k}=c\cdot t^{\frac{3n(n+1)}{2}}$, with $c$ a 
non-zero constant.

Now, for $J_2$, notice that the first $n-1$ rows of $L_{J_2}$ only consider the monomial term of $\varphi(t)$. Using 
lemma \ref{coeficientes} we obtain that the $(i,j)$-entry of $L_{J_2}$ is $c_{ie_1,j}t^{4i-j}$, for $1\leq i < n$ and $1 \leq j \leq n$. 
On the other hand, the $nth$ row of $L_{J_2}$ can be described as follows. Since $|e_{2}|=1$, 
by lemma \ref{l. basic properties} we obtain 
$$\frac{1}{j !}\frac{\partial^{j}(\vp-\vp(t))^{e_{2}}}{\partial T^{j}}|_{t}=\binom{4n+2}{j}t^{4n+2-j}+\binom{4n+3}{j}t^{4n+3-j}.$$
Summarizing, the matrix $L_{J_2}$ is:
$$\left( \begin{array} {ccc}
c_{e_{1},1}t^{4-1} & \cdots & c_{e_{1},n}t^{4-n} \\
\vdots & & \vdots \\
c_{(n-1)e_{1},1}t^{4(n-1)-1} & \cdots & c_{(n-1)e_{1},n}t^{4(n-1)-n} \\
\binom{4n+2}{1}t^{4n+2-1}+\binom{4n+3}{1}t^{4n+3-1} & \cdots & \binom{4n+2}{n}t^{4n+2-n}+
\binom{4n+3}{n}t^{4n+3-n}
\end{array} \right).$$
Multiply the $jth$ column by $t^j$. Then, for $1 \leq i <n$ multiply the $ith$ row by $t^{-4i}$. Finally, multiply the $nth$ row 
by $t^{-4n-2}$ to obtain
$$\det L_{J_{2}}=\big(t^{\frac{3n(n+1)}{2}+2}\big)
\det\left( \begin{array} {ccc}
c_{e_{1},1} & \cdots & c_{e_{1},n}\\
\vdots & & \vdots \\
c_{(n-1)e_{1},1} & \cdots & c_{(n-1)e_{1},n} \\
\binom{4n+2}{1}+\binom{4n+3}{1}t& \cdots & \binom{4n+2}{n}+\binom{4n+3}{n}t
\end{array} \right).$$
Applying the same method of the proof of proposition \ref{u. bound} in the first $n-1$ rows and using basic 
properties of determinants, we get that
\begin{align*}
\det L_{J_{2}} & = & t^{\frac{3n(n+1)}{2}+2}\det\left( \begin{array} {ccc}
\binom{4}{1} & \cdots & \binom{4}{n}\\
\vdots & & \vdots \\
\binom{4(n-1)}{1} & \cdots & \binom{4(n-1)}{n} \\
\binom{4n+2}{1} & \cdots & \binom{4n+2}{n}
\end{array} \right) \\
 & + & t^{\frac{3n(n+1)}{2}+3}\det\left( \begin{array} {ccc}
\binom{4}{1} & \cdots & \binom{4}{n}\\
\vdots & & \vdots \\
\binom{4(n-1)}{1} & \cdots & \binom{4(n-1)}{n} \\
\binom{4n+3}{1}& \cdots & \binom{4n+3}{n}
\end{array} \right).
\end{align*}
The determinants appearing in the sum are non-zero by lemma \ref{lema matriz}. Therefore $\det L_{J_2}\neq0$.

Now we need to prove that $\det L_{J_{1}}$ has the minimum order over all non-zero minors of the higher-order Jacobian 
matrix of $\varphi$.

Consider $\be=(a_{1},a_{2})\in\N^{2}$ and suppose that $a_{2}>0$. Notice that if the polynomial 
$$\frac{1}{m!}\frac{\partial^{m}(T^{4}-t^{4})^{a_{1}}(T^{4n+2}+T^{4n+3}-t^{4n+2}-t^{4n+3})^{a_{2}}}
{\partial T^{m}}\big|_{t}$$
is non-zero, then its order is greater or equal than $4n+2-m$. Let $J=\{\be_{1},\ldots,\be_{n}\}\subset\Lambda_{2,n}$ 
be such that $J\neq\{e_{1},\dots,ne_{1}\}$. In particular, the second entry of $\be_i$ is non-zero, for some $i$. 
Reorder $J$ in such a way that $\be_{i}(2)\neq0$ for $1\leq i\leq k$ and $\be_{j}(2)=0$ for $k<j\leq n$. Then, 
if $j>k$, $\be_{j}=m_{j}e_{1}$ with $1\leq m_{j}\leq n$ and if $j>i>k$, $m_{j}\neq m_{i}$.

Let us show that if $\det L_{J}\neq0$ then $ord(\det(L_J))>\frac{3n(n+1)}{2}$. To begin with,
$$\det L_{J}=\sum_{\sigma\in S_{n}}sgn(\sigma)\prod_{i=1}^{n}a_{i,\sigma(i)}=\sum_{\sigma\in S_{n}}
sgn(\sigma)\prod_{i=1}^{n}\frac{1}{\sigma(i)!}\frac{\partial^{\sigma(i)}(\varphi-\varphi(t))^{\be_{i}}}{\partial T^{\sigma(i)}}|_{t},$$
where $S_{n}$ is the symmetric group. Let 
$A_{\sigma}=\prod_{i=1}^{n}\frac{1}{\sigma(i)!}\frac{\partial^{\sigma(i)}(\varphi-\varphi(t))^{\be_{i}}}{\partial T^{\sigma(i)}}|_{t}$. 
The claim follows if we can prove that, for all $\sigma$ such that $A_{\sigma}\neq 0$, $ord(A_{\sigma})>\frac{3n(n+1)}{2}$. 
But this is true since:
\begin{align}
ord(A_{\sigma})&=\sum_{i=1}^{n}ord\Big(\frac{\partial^{\sigma(i)}(\varphi-\varphi(t))^{\be_{i}}}{\partial T^{\sigma(i)}}|_{t}\Big)\notag\\
&\geq \sum_{i=1}^{k}(4n+2-\sigma(i))+\sum_{j=k+1}^{n}(4m_{j}-\sigma(j))\notag\\
&=2k+4(nk+\sum_{j=k+1}^{n}m_{j})-\frac{n(n+1)}{2}\geq 2k+4\sum_{j=1}^{n}j-\frac{n(n+1)}{2}\notag\\
&=2k+\frac{3n(n+1)}{2}>\frac{3n(n+1)}{2}.\notag
\end{align}

Using all previous claims we see that the Pl\"ucker coordinates of $T^n_{\varphi(t)}C_n$, for $t\neq0$, look like:
$$(\cdots:ct^{\frac{3n(n+1)}{2}}:c_1t^{\frac{3n(n+1)}{2}+2}+c_2t^{\frac{3n(n+1)}{2}+3}:\cdots),$$ 
with $c,c_{1},c_{2}$ non-zero constants. Since the coordinate defined by $J_{1}$ has the minimum order, 
$Nash_{n}(C_{n})\subset U_{J_{1}}$, i.e., the affine chart obtained from dividing over $ct^{\frac{3n(n+1)}{2}}$. 
In particular, the parametrization of $Nash_{n}(C_{n})$ has the term
$$\frac{c_{1}}{c}t^{2}+\frac{c_{2}}{c}t^{3}.$$
Now proceed as in example \ref{counter} to show that $5\in Nash_n(\Gamma_n)$.

\section*{Acknowledgements}
We would like to thank Pedro Gonz\'alez P\'erez for explaining us some of his results (joint with B. Teissier) from \cite{GT}. We also 
thank Mark Spivakovsky for stimulating discussions. We are very grateful to a referee for his/her careful reading of the paper, for several 
valuable comments that greatly improved the presentation of some parts of the paper, and for suggesting the study of the independence 
from the generators of a semigroup of the logarithmic Jacobian ideal of higher order.

\vspace{.5cm}
{\footnotesize \textsc {E. Ch\'avez-Mart\'inez, Universidad Nacional Aut\'onoma de M\'exico,} \\
E-mail: enrique.chavez@im.unam.mx}\\
{\footnotesize \textsc {D. Duarte, Universidad Aut\'onoma de Zacatecas-CONACYT.} \\
E-mail: aduarte@uaz.edu.mx}\\
{\footnotesize \textsc {A. Giles Flores, Universidad Aut\'onoma de Aguascalientes.} \\
Email: arturo.giles@cimat.mx}

\begin{thebibliography}{XXX}
\addcontentsline{toc}{chapter}{\numberline{References}}
\bibitem{At}A. Atanasov, C. Lopez, A. Perry, N. Proudfoot, M. Thaddeus; \textit{Resolving toric varieties with
Nash blow-ups}, Experimental Math. \textbf{20}, no. 3, (2011), 288-303.
\bibitem{BD}P. Barajas, D. Duarte; \textit{On the module of differentials of order n of hypersurfaces}, J. Pure Appl. Algebra,
 224(2), (2020), 536-550.
\bibitem{BJNB}H. Brenner, J. Jeffries, L. N\'u\~nez-Betancourt, \textit{Quantifying singularities with differential operators},
Adv. Math, 358, (2019), 106843.
\bibitem{CLS}D. Cox, J. Little, H. Schenck; \textit{Toric Varieties}, Graduate Studies in Mathematics, Volume 124,
AMS, 2011.
\bibitem{dFDo}T. de Fernex, R. Docampo, \textit{Nash blow-ups of jet schemes}, Ann. Inst. Fourier, Grenoble, \textbf{69}, no. 6, (2019), 2577-2588.
\bibitem{D1}D. Duarte; \textit{Nash modification on toric surfaces}, Revista de
la Real Academia de Ciencias Exactas, F\'isicas y Naturales, Serie A Matem\'aticas, 
Volume 108, Issue 1, (2014), 153-171.
\bibitem{D3}D. Duarte; \textit{Computational aspects of the higher Nash blowup of hypersurfaces}, Journal of Algebra,
Volume 477, (2017), 211-230. 
\bibitem{DG}D. Duarte, D. Green Tripp; \textit{Nash modification on toric curves}, Singularities, Algebraic Geometry, 
Commutative Algebra, and Related Topics, Springer Nature Switzerland AG, G.-M. Greuel, L. Narv\'aez Macarro, 
S. Xamb\'o-Descamps (eds), DOI: 10.1007/978-3-319-96827-8-8, (2018), 191-202.
\bibitem{GS-1}G. Gonzalez-Sprinberg; \textit{Eventails en dimension 2 et transform\'{e} de Nash}, Publ.
de l'E.N.S., Paris (1977), 1-68.
\bibitem{GS-2}G. Gonzalez-Sprinberg; \textit{R\'{e}solution de Nash des points doubles rationnels}, Ann. Inst.
Fourier, Grenoble \textbf{32}, 2 (1982), 111-178.
\bibitem{GT}P. D. Gonz\'alez Perez, B. Teissier; \textit{Toric geometry and the Semple-Nash modification}, Revista de la Real 
Academia de Ciencias Exactas, F\'isicas y Naturales, Serie A, Matem\'aticas, Volume 108, Issue 1, (2014), 1-48.
\bibitem{GM}D. Grigoriev, P. Milman; \textit{Nash resolution for binomial varieties as Euclidean division. A priori termination
bound, polynomial complexity in essential dimension 2}, Advances in Mathematics, Volume 231, (2012), 3389-3428.
\bibitem{Hi}H. Hironaka; \textit{On Nash blowing-up}, Arithmetic and Geometry II, Progr. Math., vol 36,
Birkhauser Boston, Mass., (1983), 103-111.
\bibitem{LJ-R}M. Lejeune-Jalabert, A. Reguera; \textit{The Denef-Loefer series for toric surfaces singularities},
Proceedings of the International Conference on Algebraic Geometry and Singularities (Spanish) (Sevilla, 2001),
Rev. Mat. Iberoamericana, vol 19, (2003), 581-612.
\bibitem{No}A. Nobile; \textit{Some properties of the Nash blowing-up}, Pacific Journal of Mathematics,
\textbf{60}, (1975), 297-305.
\bibitem{OZ}A. Oneto, E. Zatini; \textit{Remarks on Nash blowing-up}, Rend. Sem. Mat. Univ. Politec.
Torino \textbf{49} (1991), no. 1, 71-82, Commutative algebra and algebraic geometry, II (Italian) (Turin 1990).
\bibitem{R}V. Rebassoo; \textit{Desingularisation properties of the Nash blowing-up process}, Thesis, University
of Washington (1977).
\bibitem{S}J. G. Semple; \textit{Some investigations in the geometry of curve and surface elements}, 
Proc. London Math. Soc. (3) \textbf{4} (1954), 24-49.
\bibitem{Sp}M. Spivakovsky; \textit{Sandwiched singularities and desingularisation of surfaces by normalized
Nash transformations}, Ann. of Math. (2) \textbf{131}, no. 3, (1990), 411-491.
\bibitem{St}B. Sturmfels; \textit{Gr\"obner Bases and Convex Polytopes}, University Lecture Series, Vol. 8,
American Mathematical Society, Providence, RI, 1996.
\bibitem{T}R. Toh-Yama; \textit{Higher Nash blowups of }$A_3-singularity$, Comm. Algebra, \textbf{47}, no. 11, (2019), 4541-4564.
\bibitem{Y1}T. Yasuda; \textit{Higher Nash blowups}, Compositio Math. \textbf{143}, no. 6, (2007), 1493-1510.
\bibitem{Y2}T. Yasuda; \textit{Flag higher Nash blowups}, Comm. Algebra, \textbf{37} (2009), 1001-1015.
\bibitem{Y3}T. Yasuda.; \textit{Universal flattening of Frobenius}, American Journal of Mathematics,
Volume 134, No. 2, (2012), 349-378.
\end{thebibliography}
\end{document}